\crefname{section}{§}{§§}
\Crefname{section}{§}{§§}
\titleformat{\section}[block]{\large\bfseries\filcenter}{}{1em}{}
\titleformat{\subsection}[hang]{\bfseries}{}{1em}{}
\newcommand{\cercle}[4]{
\node[circle,inner sep=0,minimum size={2*#2}](a) at (#1) {};
\draw[blue,thick] (a.#3) arc (#3:{#3+#4}:#2);
}
\newenvironment{theorem}[2][Theorem]{\begin{trivlist}
\item[\hskip \labelsep {\bfseries #1}\hskip \labelsep {\bfseries #2.}]}{\end{trivlist}}
\newenvironment{lemma}[2][Lemma]{\begin{trivlist}
\item[\hskip \labelsep {\bfseries #1}\hskip \labelsep {\bfseries #2.}]}{\end{trivlist}}
\newenvironment{proposition}[2][Proposition]{\begin{trivlist}
\item[\hskip \labelsep {\bfseries #1}\hskip \labelsep {\bfseries #2.}]}{\end{trivlist}}
\newenvironment{definition}[2][Definition]{\begin{trivlist}
\item[\hskip \labelsep {\bfseries #1}\hskip \labelsep {\bfseries #2.}]}{\end{trivlist}}
\newenvironment{example}[2][Example]{\begin{trivlist}
\item[\hskip \labelsep {\bfseries #1}\hskip \labelsep {\bfseries #2.}]}{\end{trivlist}}
\title{The largest eigenvalue of a convex function, duality, and a theorem of Slodkowski}
\author{Matthew Dellatorre}
\date{ }
\begin{document}
\maketitle
\begin{abstract}
First, we provide an exposition of a theorem due to Slodkowski regarding the largest \textquotedblleft eigenvalue\textquotedblright \hspace{.5mm} of a convex function. In his work on the Dirichlet problem, Slodkowski introduces a generalized second-order derivative which for $C^2$ functions corresponds to the largest eigenvalue of the Hessian. The theorem allows one to extend an a.e lower bound on this largest \textquotedblleft eigenvalue\textquotedblright \hspace{.5mm} to a bound holding everywhere. Via the Dirichlet duality theory of Harvey and Lawson, this result has been key to recent progress on the fully non-linear, elliptic Dirchlet problem.
Second, using the Legendre-Fenchel transform we derive a dual interpretation of this largest eigenvalue in terms of convexity of the conjugate function. This dual characterization offers more insight into the nature of this largest eigenvalue and allows for an alternative proof of a bound needed for the theorem.
\end{abstract}
\tableofcontents {}
\section{1. Introduction}
\subsection{1.1 Motivation}
It is known that a convex function $u$ on $\mathbb{R}^n$ is differentiable almost everywhere and has distributional second-order partial derivatives. It is also known that a convex function is twice differentiable almost everywhere in the sense that for a.e. $x\in \mathbb{R}^n$, there exists a symmetric positive semi-definite matrix $D^2f(x)$ such that
$$f(x+h)=f(x)+\langle\nabla f(x), h\rangle + \frac{1}{2} \langle D^2f(x)h,h\rangle+ o(||h||^2). $$
The operator $D^2f$ is called the \textit{second-order Peano derivative}. Note that its existence does not imply the existence of $\nabla f$ in a neighbourhood, so it should not be considered the second derivative of $f$ in the usual sense. This result is due to Alexandrov [1]. See also [5], [6].

In [9], Slodkowski studies uniqueness for a generalized Dirichlet problem in the class of $q-$plurisubharmonic ($q$--psh) functions (for $C^2$ functions on $\mathbb{C}^n$ this is equivalent to the complex Hessian having $n-q$ nonnegative eigenvalues at every point). The problem of uniqueness reduces to showing that the difference of two such functions is $n-1$--psh, which implies that it satisfies a maximum principle, from which uniqueness then follows. Functions of this $q$--psh class can be approximated by a subclass which are convex up to a quadratic polynomial. Because of this it is sufficient to study this smaller class, which given their quasi-convexity, retain some of the nice properties of convex functions. In particular, quasi-convex functions are a.e. twice differentiable, in the above sense. Thus, the second-order behavior of these functions and their difference is known a.e. However, to show that the difference is a member of the above mentioned class, they must satisfy this eigenvalue property everywhere. To this end, Slodkowski introduces a generalized second-order derivative, which is simply the largest eigenvalue of the Hessian for $C^2$ functions, and proves that if this quantity is bounded below almost everywhere in some domain, it is bounded below everywhere in that domain. Using this, he shows that the difference is contained in the desired $n-1$--psh class.

\vspace{.1in}
Following Slodkowski [9, \S 3 ], we define the largest \textquotedblleft eigenvalue\textquotedblright \hspace{.1mm} of a convex function.

\begin{definition} {1.1} \textit{Let $u:\mathbb{R}^n \rightarrow \mathbb{R}$. If $\nabla u(x_0)$ exists, $K(u,x_0)$ is defined by the formula 
$$K(u,x_0) = \limsup_{\epsilon \rightarrow 0} 2\epsilon^{-2} \text{ max } \{ u(x_0 + \epsilon h)  - u(x_0) - \epsilon \langle \nabla u(x_0), h \rangle : h\in S^{n-1} \}$$ otherwise $K(u,x)$ is defined as $+\infty$.}
\end{definition}

This is the generalized second-order derivative that Slodkowski defines. For the sake of context, note that this quantity is a modification to the \textit{second-order upper Peano derivative} of $u$ in the direction of $h$, which is defined as 
$$ \limsup_{\epsilon \rightarrow 0^+}2\epsilon^{-2} (u(x_0+\epsilon h)-u(x_0)-\epsilon \langle \nabla u(x_0),h \rangle).$$ 
Being maximal, this second-order derivative is of particular interest because it corresponds to the largest eigenvalue of the Hessian when defined (which it does, in the above sense, almost everywhere for convex functions), and gives a useful quantity to work with otherwise, especially in the context of Slodkowski's $C^{1,1}$ estimates.

Regarding this quantity $K(u,x)$, Slodkowski shows the following.
\begin{theorem}{1.2}
([9, Cor. 3.5])\textit{ Let $u:\mathbb{R}^n \rightarrow \mathbb{R}$ be a locally convex function in $U\subset \mathbb{R}^n$, such that $K(u,x)\geq M$ for almost every $x\in U$. Then $K(u,x)\geq M$ for all $x\in U$.}
\end{theorem}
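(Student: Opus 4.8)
The plan is to argue by contradiction and localize the failure to a set of positive measure where it collides with the hypothesis. Suppose $K(u,x_0)<M$ for some $x_0\in U$ (we may assume $M\in\mathbb{R}$, since by Alexandrov's theorem $K(u,\cdot)$ is finite a.e., so the case $M=+\infty$ is vacuous). Since $K(u,x_0)<\infty$, $\nabla u(x_0)$ exists, and after subtracting the affine map $x\mapsto u(x_0)+\langle\nabla u(x_0),x-x_0\rangle$ and translating, we may assume $x_0=0$, $u(0)=0$, $\nabla u(0)=0$; a direct computation from Definition 1.1 shows that subtracting an affine function changes neither $K(u,\cdot)$, nor the convexity of $u$, nor the hypothesis. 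Pick reals $M'<M''<M$ with $K(u,0)<M'$. Since $K(u,0)$ is a $\limsup$ as $\epsilon\to0$ and $u(0)=0$, $\nabla u(0)=0$, there is $r_0>0$ with $\overline{B(0,r_0)}\subset U$, $u$ convex on it, and
$$u(x)\le\tfrac{M'}{2}\,|x|^2\qquad\text{for all }|x|\le r_0.$$

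Now set $v(x)=u(x)-\tfrac{M''}{2}|x|^2$ on $B(0,r_0)$. Then $v$ is semiconvex (it differs from the convex function $u$ by a quadratic, so $v+C|x|^2$ is convex for $C$ large), and by the displayed inequality $v(x)\le\tfrac{M'-M''}{2}|x|^2<0=v(0)$ for $0<|x|\le r_0$; hence $v$ has a \emph{strict} local maximum at the origin. Here I would invoke Jensen's Lemma, the standard perturbation lemma for semiconvex functions: for every sufficiently small $\delta>0$, the set
$$A_\delta=\{\,x\in B(0,r_0):\exists\,p\in\mathbb{R}^n,\ |p|\le\delta,\ \text{such that }y\mapsto v(y)+\langle p,y\rangle\text{ has a local maximum at }x\,\}$$
has positive Lebesgue measure. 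For a self-contained treatment one proves this by observing that, because the maximum at $0$ is strict, the superdifferential of $v$ taken over $A_\delta$ covers a neighbourhood of the origin, and then applying the area formula to the gradient map of the semiconvex function $v$; I expect this to be the content of the lemmas the author develops in the run-up to Cor.\ 3.5.

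Finally I would collide two almost-everywhere statements on $A_\delta$. First, $v$ is twice Peano-differentiable a.e.\ (apply Alexandrov to $v+C|x|^2$), and at any such point $x\in A_\delta$ the local-maximum condition forces, to first order, $\nabla v(x)=-p$ and, to second order, $D^2v(x)\le0$; hence $D^2u(x)=D^2v(x)+M''I\le M''I$ and $\lambda_{\max}(D^2u(x))\le M''<M$. Second, by Alexandrov's theorem $u$ is twice Peano-differentiable for a.e.\ $x\in U$, and at each such $x$ one checks from Definition 1.1 — using that the remainder in the Alexandrov expansion is $o(|h|^2)$ \emph{uniformly} over $h\in S^{n-1}$ — that $K(u,x)=\lambda_{\max}(D^2u(x))$; combined with the hypothesis $K(u,\cdot)\ge M$ a.e.\ this gives $\lambda_{\max}(D^2u(x))\ge M$ for a.e.\ $x\in U$. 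Since $|A_\delta|>0$, there is a point $x\in A_\delta$ satisfying both conclusions, i.e.\ $M\le\lambda_{\max}(D^2u(x))\le M''<M$ — a contradiction. Therefore $K(u,x)\ge M$ for every $x\in U$.

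The step I expect to be the main obstacle is Jensen's Lemma, i.e.\ the passage from ``$v$ has a strict interior maximum'' to ``$v$ attains tilted maxima on a set of positive measure'': this is the real engine, and an honest exposition must either cite it or reprove the special case needed here by a covering/degree argument via the area formula. A secondary technical point is the uniformity in $h\in S^{n-1}$ of the second-order remainder, which is exactly what makes the $\max$ over $h$ in Definition 1.1 compatible with the pointwise Alexandrov expansion and hence yields $K(u,x)=\lambda_{\max}(D^2u(x))$ wherever $u$ is twice differentiable.
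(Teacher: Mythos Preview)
Your argument is correct, but it follows a genuinely different route from the paper. The paper deduces Theorem~1.2 from the quantitative density result Theorem~1.3: if $K(u,x_0)=k_0<M$, then $\{K(u,\cdot)<M\}$ has lower density at $x_0$ at least $\bigl(\tfrac{M-k_0}{2M}\bigr)^n>0$, hence positive measure, contradicting the a.e.\ hypothesis. Theorem~1.3 in turn is proved geometrically: Proposition~1.6 converts the bound $K(u,x_0)<k$ into a sphere of support from above at $(x_0,u(x_0))$, and Lemma~1.7 estimates the density of points admitting smaller supporting spheres by lowering spheres of fixed radius onto the graph and controlling the resulting center--to--contact map by Lipschitz estimates. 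Your approach bypasses the spheres entirely: you subtract a quadratic to obtain a semiconvex $v$ with a strict interior maximum, invoke Jensen's Lemma to get a positive-measure set of tilted maxima, and read off $D^2u\le M''I$ at Alexandrov points of that set. This is exactly the upper--contact--jet route the paper attributes to Harvey--Lawson in \S1.2; so your expectation that the ``lemmas in the run-up to Cor.~3.5'' are Jensen's Lemma is mistaken---Slodkowski's engine is the sphere-based density Lemma~1.7, not Jensen. What the paper's approach buys is an explicit, quantitative lower density bound and a self-contained geometric picture; what yours buys is brevity and a direct link to the viscosity-solutions toolkit, at the cost of importing Jensen's Lemma (whose proof, via Alexandrov and the area formula for $\nabla v$, is of comparable depth to Lemma~1.7).
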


As mentioned above, the recent work of Harvey and Lawson on the Dirichlet problem was one of our motivations for studying this quantity $K(u,x)$ and Slodkowski's proof of the above result. In [4] they study fully non-linear degenerate elliptic equations of the form 
\begin{align}
F(\text{Hess}(u))=0 \text{ on } \Omega\\
u=\phi \text{ on } \partial \Omega.
\end{align}
Given certain convexity assumptions on the boundary, they establish the existence and uniqueness of continuous solutions using their new Dirichlet duality theory. The work of Slodkowski was \textquotedblleft an inspiration\textquotedblright \hspace{.1mm} for that paper, and in particular Theorem 1.2 is the \textquotedblleft deepest ingredient\textquotedblright \hspace{.25mm} of their proof of uniqueness of viscosity solutions of (1) [4, p. 398]. These existence and uniqueness results apply to many important problems including all branches of the homogeneous Monge-Amp\`{e}re equation, all branches of the special Lagrangian potential equation, and equations appearing naturally in Lagrangian and calibrated geometry.

Given the usefulness of this generalized derivative and the above result to recent progress on important problems, it makes sense to better understand both the derivative and the proof of the theorem. The proof is fairly difficult and very geometric so here an illustrated exposition is provided. The quantity $K(u,x)$ is then studied further for convex $u$. In particular, the Legendre--Fenchel transform is applied to give a simple alternative characterization of $K(u,x)$ in terms of the convexity of the dual function $u^*$ to $u$. This allows for an alternative proof to a key proposition needed to prove Slodkowski's theorem. Altogether, there are now three ways to view this generalized derivative $K(u,x)$: analytically (Definition 1.1), geometrically (Proposition 1.6), and dually (Theorem 1.9).

\subsection{1.2 Summary}
Theorem 1.2 follows immediately from the following theorem, the proof of which is the main focus of the first part of this paper.
\begin{theorem}{1.3} 
([9, Thm. 3.2])\textit{ Let $u$ be convex near $x_0\in \mathbb{R}^{n}.$ Assume that $K(u,x_0)=k_0$ is finite. Then for every $k>k_0$ the set $\{x :K(u,x)<k\}$ is Borel and its lower density at $x_0$ is not less than $\left(\frac{k-k_0}{2k}\right)^n.$}
\end{theorem}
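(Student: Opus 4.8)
\medskip
\noindent\emph{Plan of proof.}
The plan is to derive the density bound from an Alexandrov--Bakelman--Pucci-type estimate applied to a contact set of paraboloids of opening $k$ lying above $u$. Since $K(u,\cdot)$ is unaffected by subtracting an affine function, I would first normalize so that $x_0=0$, $u(0)=0$ and $\nabla u(0)=0$; then $u\geq 0$ near $0$ by convexity, and the hypothesis $K(u,0)=k_0$ means that for each $\delta>0$ there is $r_\delta>0$ with $0\leq u(x)\leq\tfrac12(k_0+\delta)|x|^2$ on $B(0,r_\delta)$. Fixing $k>k_0$ and $\delta>0$ with $k_0+\delta<k$, and writing $\phi(x):=\tfrac k2|x|^2-u(x)$, this yields the sandwich $\tfrac12(k-k_0-\delta)|x|^2\leq\phi(x)\leq\tfrac k2|x|^2$ on $B(0,r_\delta)$. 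For a radius $r\leq r_\delta$ and a vector $p$, I would take $x_p\in\overline{B(0,r)}$ maximizing $u(x)-\tfrac k2|x|^2-\langle p,x\rangle$ and let $c_p$ be this maximum, so that the paraboloid $\Pi_p(y):=\tfrac k2|y|^2+\langle p,y\rangle+c_p$ lies above $u$ on $\overline{B(0,r)}$ and touches it at $x_p$. Since $\phi(x_p)\leq-\langle p,x_p\rangle\leq|p|\,|x_p|$ while $\phi(x_p)\geq\tfrac12(k-k_0-\delta)|x_p|^2$, one gets $|x_p|\leq 2|p|/(k-k_0-\delta)=:R$, so for $|p|\leq\rho$ with $\rho$ small enough that $R$ is comfortably below $r$, every $x_p$ lies in $B(0,R)$ and is interior to $B(0,r)$.

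I would next observe the key property at a contact point: at an interior $x_p$ the paraboloid $\Pi_p$ is a genuine local upper support for $u$, so, $u$ being convex, $u$ is differentiable at $x_p$ with $\nabla u(x_p)=\nabla\Pi_p(x_p)=kx_p+p$; then $u(x_p+\epsilon h)-u(x_p)-\epsilon\langle\nabla u(x_p),h\rangle\leq\tfrac k2\epsilon^2$ for all $h\in S^{n-1}$ and small $\epsilon$, hence $K(u,x_p)\leq k$. Writing $\mathcal C$ for the (compact) set of all such contact points $x_p$, $|p|\leq\rho$, we then have $\mathcal C\subseteq B(0,R)\cap\{K(u,\cdot)\leq k\}$, while the map $\Psi(x):=\nabla u(x)-kx$ satisfies $\Psi(x_p)=p$, so $\Psi(\mathcal C)\supseteq B(0,\rho)$ and $|\Psi(\mathcal C)|\geq\omega_n\rho^n$, where $\omega_n:=|B(0,1)|$.

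The crux --- and the step I expect to be the main obstacle --- is the matching bound $|\Psi(\mathcal C)|\leq k^n|\mathcal C|$. The difficulty is that $\mathcal C$ carries no convexity structure, so one cannot feed $u$ directly into Monge--Amp\`{e}re theory; the remedy is that the pinching of $u$ between its tangent plane at a point of $\mathcal C$ and a paraboloid of opening $k$ --- valid on a \emph{fixed} ball, not merely locally --- makes $\nabla u$ Lipschitz on $\mathcal C$. Concretely, for $x,x'\in\mathcal C$ I would combine the upper supports $u(x')\leq u(x)+\langle\nabla u(x),x'-x\rangle+\tfrac k2|x-x'|^2$ and $u(y)\leq u(x')+\langle\nabla u(x'),y-x'\rangle+\tfrac k2|y-x'|^2$ with the subgradient inequality $u(y)\geq u(x)+\langle\nabla u(x),y-x\rangle$, evaluate at $y=x'+t\hat v$ with $\hat v$ the unit vector along $\nabla u(x)-\nabla u(x')$, and optimize in $t$ to obtain $|\nabla u(x)-\nabla u(x')|\leq k|x-x'|$. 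Hence $\Psi|_{\mathcal C}$ is Lipschitz, so by Rademacher and Alexandrov it is differentiable at a.e.\ $x\in\mathcal C$ with derivative $D^2u(x)-kI$; since the upper support at $x$ also forces $0\leq D^2u(x)\leq kI$, we get $|\det(D^2u(x)-kI)|=\det(kI-D^2u(x))\leq k^n$. The area formula then gives $\omega_n\rho^n\leq|\Psi(\mathcal C)|\leq\int_{\mathcal C}|\det D\Psi|\leq k^n|\mathcal C|$, i.e.\ $|\mathcal C|\geq\omega_n\rho^n/k^n$.

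To conclude, since $\mathcal C\subseteq B(0,R)\cap\{K(u,\cdot)\leq k\}$ one has
$$\frac{|\{K(u,\cdot)\leq k\}\cap B(0,R)|}{|B(0,R)|}\;\geq\;\frac{\omega_n\rho^n/k^n}{\omega_n R^n}\;=\;\left(\frac{k-k_0-\delta}{2k}\right)^{\!n},$$
and since $\rho$, hence $R$, may be taken arbitrarily small while $r=r_\delta$ stays fixed, this holds at all small scales, so the lower density of $\{K(u,\cdot)\leq k\}$ at $0$ is at least $\bigl((k-k_0-\delta)/(2k)\bigr)^n$; letting $\delta\downarrow0$ improves this to $\bigl((k-k_0)/(2k)\bigr)^n$. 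Re-running the argument with $k$ replaced by any $\tilde k\in(k_0,k)$ shows that $\{K(u,\cdot)\leq\tilde k\}$ has lower density at least $\bigl((\tilde k-k_0)/(2\tilde k)\bigr)^n$ at $0$; since $\{K\leq\tilde k\}\subseteq\{K<k\}$ and $\sup_{k_0<\tilde k<k}\bigl((\tilde k-k_0)/(2\tilde k)\bigr)^n=\bigl((k-k_0)/(2k)\bigr)^n$, the bound passes to $\{K(u,\cdot)<k\}$, as desired. Finally, the Borel assertion follows from measurability of $x\mapsto K(u,x)$: the set on which $\nabla u$ exists is Borel and $\nabla u$ is Borel on it, and for fixed $x$ the bracketed quantity is continuous in $\epsilon$, so the $\limsup$ equals an infimum over rational scales of a supremum over rational $\epsilon$ of Borel functions of $x$.
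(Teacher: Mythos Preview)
Your argument is correct and is genuinely different from the route the paper takes. The paper follows Slodkowski's original proof: it first trades the bound $K(u,x_0)<k$ for a \emph{sphere} of support to the graph (Proposition~1.6), then proves a density lemma (Lemma~1.7) by building auxiliary convex functions from cones tangent to that sphere and using the purely geometric fact that the map ``contact point $\mapsto$ center of supporting $r$-sphere'' is $1$-Lipschitz. No Alexandrov theorem and no area formula enter; the measure comparison comes from Lipschitz images of balls. Your proof, by contrast, is an Alexandrov--Bakelman--Pucci/upper-contact-jet argument with \emph{paraboloids}: you slide quadratics of opening $k$ over $u$, locate the contact set, show $\nabla u$ is $k$-Lipschitz there, and then invoke Alexandrov's second-differentiability together with the area formula to get $|\Psi(\mathcal C)|\le k^n|\mathcal C|$. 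This is essentially the Harvey--Lawson reformulation the paper alludes to in \S1.2 (their reference~[5]); it is more analytic and extends cleanly to quasi-convex functions, while Slodkowski's sphere argument is more elementary in that it avoids Alexandrov's theorem entirely.

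One place where you are a bit quick is the line ``by Rademacher and Alexandrov it is differentiable at a.e.\ $x\in\mathcal C$ with derivative $D^2u(x)-kI$.'' What you actually need is that at a.e.\ density point $x$ of $\mathcal C$ where $u$ has an Alexandrov second-order expansion and the Lipschitz extension $\tilde\Psi$ is differentiable, one has $D\tilde\Psi(x)=D^2u(x)-kI$; this follows because $\tilde\Psi$ agrees with $\nabla u-k\,\mathrm{id}$ on $\mathcal C$, density lets you test the derivative in every direction through $\mathcal C$, and Alexandrov gives the first-order expansion of $\nabla u$ along those sequences. Without this identification a naive Lipschitz bound on $\tilde\Psi$ only yields $(2k)^n$ rather than $k^n$, so it is worth saying one sentence here. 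Similarly, your Lipschitz step $|\nabla u(x)-\nabla u(x')|\le k|x-x'|$ uses the upper paraboloid support on the \emph{whole} of $\overline{B(0,r)}$; you should note that the optimal $t=|\nabla u(x)-\nabla u(x')|/k$ stays inside $\overline{B(0,r)}$ once $\rho$ (hence $R$) is chosen small enough, since $|\nabla u(x_p)|=|kx_p+p|\le kR+\rho$. These are presentation issues, not gaps; the strategy and the final density bound are right.
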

Lower density is defined as follows.
\begin{definition} {1.4} \textit{The lower density of a Lebesgue measurable set $Z\subset \mathbb{R}^{n}$ at $x_0 \in \mathbb{R}^{n}$ is the number
$$ \liminf_{\varepsilon \rightarrow 0} \dfrac{m_n \left( Z\cap B(x_0,\varepsilon) \right)}{m_{n}\left(B(x_0,\varepsilon) \right)},$$
 where $m_{n}$ denotes the $n$-dimensional Lebesgue measure.}
\end{definition}

Slodkowski's proof of Theorem 1.3 divides naturally into two parts. First, an equivalent geometric characterization of a bound on $K(u,x)$ is given in terms of spheres tangent to the graph of $u$. This is the content of the following definition and proposition.

For $c=(c_1,\dots,c_{n+1})\in \mathbb{R}^n$, let $S(c,r)$ denote the $n-$sphere with center $c$ and radius $r$, and $B(c,r)$ denote the open $n+1$-disk of radius $r$ centered at $c$.
\begin{definition} {1.5}
\textit{The sphere $S(c,r)$ is a sphere of support from above at $y=(x_0,u(x_0))$ if $y\in S(c,r)$, $B(c,r) \cap \text{graph}(u)=\emptyset$ and $c_{n+1}>u(P(c))$, where $P$ denotes the orthogonal projection of $\mathbb{R}^{n+1}$ onto $\mathbb{R}^n$.}
\end{definition}

Thus, $S(c,r)$ can be visualized as a ball resting on a \textquotedblleft surface\textquotedblright \hspace{1mm} that is the graph of $u$, and such that $(x_0,u(x_0))$ is one of its resting points.

\begin{proposition}{1.6}
([9, Prop. 3.3]) \textit{Let $U\subset \mathbb{R}^{n}$ be open and $u:U\rightarrow \mathbb{R}$ be convex. Assume that $u$ has gradient at $x$}.\\

\textit{(i) If $u$ has second-order Peano derivatives at $x$, then $K(u,x)$ is equal to the norm (i.e. the largest eigenvalue) of the real Hessian of $u$ at $x$}.\\

\textit{(ii) If $K(u,x)$ is finite, then for every $K>K(u,x)$ there is $\varepsilon > 0$ such that $u(x+h)-u(x)-\langle\nabla u(x),h\rangle \hspace{2mm}\leq \dfrac{1}{2}K|h|^{2}.$}\\

\textit{(iii) If there is a sphere $S(c,r), r>0$ which supports the graph of $u$ from the above at $(x,u(x))$, then 
\begin{align}
K(u,x)\leq \dfrac{(1+|\nabla u(x)|^{2})^{\frac{3}{2}}}{r}.
\end{align}}
\end{proposition}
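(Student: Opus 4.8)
The plan is to handle the three parts separately: (i) and (ii) are short unwindings of Definition 1.1, while (iii) carries the geometric content and I would reduce it to a comparison with an explicit smooth function. For (i): if the second-order Peano derivative $D^2u(x)$ exists, then $u(x+v)=u(x)+\langle\nabla u(x),v\rangle+\tfrac12\langle D^2u(x)v,v\rangle+o(|v|^2)$, and the key point is that for $v=\epsilon h$ with $h\in S^{n-1}$ one has $|v|=\epsilon$, so the error is $o(\epsilon^2)$ \emph{uniformly} in $h$. Dividing the bracket in Definition 1.1 by $\tfrac12\epsilon^2$ gives $\langle D^2u(x)h,h\rangle+o(1)$ uniformly over $S^{n-1}$; maximizing and letting $\epsilon\to0$ yields $K(u,x)=\max_{h\in S^{n-1}}\langle D^2u(x)h,h\rangle$, which for the symmetric positive semidefinite matrix $D^2u(x)$ is its largest eigenvalue, i.e. its operator norm. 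For (ii): writing $k_0:=K(u,x)$ for the $\limsup$, the inequality $k_0<K$ means there is $\epsilon_0>0$ with $2\epsilon^{-2}\max_{h\in S^{n-1}}\{u(x+\epsilon h)-u(x)-\epsilon\langle\nabla u(x),h\rangle\}<K$ for all $0<\epsilon<\epsilon_0$; given $v$ with $0<|v|<\epsilon_0$, applying this with $\epsilon=|v|$ and $h=v/|v|$ gives $u(x+v)-u(x)-\langle\nabla u(x),v\rangle\le\tfrac12K|v|^2$, and the case $v=0$ is trivial, so the bound holds on $B(x,\epsilon_0)$.

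For (iii) the idea is to dominate $u$ near $x$ by the smooth function whose graph is the relevant hemisphere of $S(c,r)$ and exploit monotonicity of $K$. First I would record the elementary fact that if $v\le w$ on a neighbourhood of $x$, $v(x)=w(x)$, and $\nabla v(x)=\nabla w(x)$ exists, then $K(v,x)\le K(w,x)$: this is immediate from Definition 1.1, since for small $\epsilon$ and all $h\in S^{n-1}$ the bracket for $v$ is $\le$ the bracket for $w$, hence so are their maxima over $S^{n-1}$. Thus it suffices to produce a $C^\infty$ function $\phi$ defined near $x$ with $\phi(x)=u(x)$, $\nabla\phi(x)=\nabla u(x)$, $u\le\phi$ near $x$, and $\lambda_{\max}(D^2\phi(x))=(1+|\nabla u(x)|^2)^{3/2}/r$; then part (i) applied to $\phi$ finishes the proof.

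The geometry enters in locating the sphere and identifying $\phi$. Since $(x,u(x))\in S(c,r)$ and the open ball misses $\mathrm{graph}(u)$, the function $h\mapsto|x+h-P(c)|^2+(u(x+h)-c_{n+1})^2-r^2$ is nonnegative, vanishes at $h=0$, and is differentiable there (because $\nabla u(x)$ exists), so its gradient vanishes at $0$; this forces $c-(x,u(x))$ to be parallel to $(-\nabla u(x),1)$, and since $|c-(x,u(x))|=r$ we get $c-(x,u(x))=\pm\tfrac{r}{\sqrt{1+|\nabla u(x)|^2}}(-\nabla u(x),1)$. To pin down the $+$ sign — the center lies above — I would combine convexity of $u$ (so $\mathrm{graph}(u)$ lies above its tangent plane at $x$) with the hypothesis $c_{n+1}>u(P(c))$; together these rule out the configuration in which the ball is tangent from below. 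With the center located, near $x$ the lower hemisphere of $S(c,r)$ is the graph of $\phi(x')=c_{n+1}-\sqrt{r^2-|x'-P(c)|^2}$, a smooth function with $\phi(x)=u(x)$ and $\nabla\phi(x)=\nabla u(x)$; and since $(x',u(x'))$ lies outside the open ball while $u$ is continuous (convexity) and $u(x)=\phi(x)$, one gets $u\le\phi$ on a neighbourhood of $x$. A direct computation gives $D^2\phi(x)=\tfrac1s\big(I+\nabla\phi(x)\otimes\nabla\phi(x)\big)$ with $s=r/\sqrt{1+|\nabla u(x)|^2}$ at the contact point, whose largest eigenvalue is $\tfrac1s(1+|\nabla u(x)|^2)=(1+|\nabla u(x)|^2)^{3/2}/r$ — equivalently, one can quote that a sphere of radius $r$ has all principal curvatures $1/r$ and convert its shape operator to the Hessian of the defining graph.

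The main obstacle I anticipate is the geometric bookkeeping in (iii): correctly showing that the supporting sphere touches $\mathrm{graph}(u)$ exactly in the normal direction $(-\nabla u(x),1)$ and on the correct side, when $u$ is only known to be differentiable at the single point $x$. This is precisely where convexity — unused in (i)–(ii) but assumed throughout — is essential: it supplies the global supporting hyperplane at $x$ that fixes the sign of $c-(x,u(x))$, and the continuity needed to upgrade the pointwise equality $u(x)=\phi(x)$ to the one-sided inequality $u\le\phi$ near $x$. Once this domination with matching $1$-jets is established, the remaining Hessian-of-a-sphere computation is routine.
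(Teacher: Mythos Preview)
Your proposal is correct and follows essentially the same approach as the paper for part (iii): both dominate $u$ by the lower-hemisphere function $d$ (your $\phi$), invoke monotonicity of $K$ under one-sided domination with matching first-order data, and reduce to computing the largest eigenvalue of $D^2d(x)$. Your Hessian computation via $D^2\phi=\tfrac1s\bigl(I+\nabla\phi\otimes\nabla\phi\bigr)$ is slightly more direct than the paper's coordinate reduction and series expansion, and you also supply the short arguments for (i) and (ii), which the paper omits.
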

Parts (ii) and (iii) give the above mentioned equivalence between a bound on $K(u,x)$ and a sphere of support to the graph of a corresponding radius at $(x,u(x))$. See section 2.2 for a more detailed explanation.\\

The second part of the proof then uses this alternative characterization of $K(u,x)$ to obtain a density result, which is essentially the statement of the theorem in terms of spheres of support as opposed to $K(u,x)$. This is the content of the following lemma.

\begin{lemma}{1.7} 
([9, Lemma 3.4]) \textit{Let $u$ be a non-negative convex function in $B(0,d)\subset \mathbb{R}^{n}$, $d>0$, such that $u(0)=0$ and $\nabla u(0)=0$. Let $R>0$ and assume that the closed ball \={B}$(c,R)$, $c=(0,...,0,R)\in \mathbb{R}^{n+1}$, intersects the graph of $u$ only at $0\in \mathbb{R}^{n+1}$. Let $X_{r}$, $0<r<R$ denote the set of all $x\in B(0,d)\subset \mathbb{R}^{n}$ such that there exists a sphere of radius $r$ supporting the graph of $u$ from above at $(x,u(x))$. Then the lower density of $X_{r}$ at 0 is not less than $((R-r)/2R)^{n}$.}
\end{lemma}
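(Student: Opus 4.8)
The plan is to follow Slodkowski, whose argument for this lemma separates into a geometric construction producing points of $X_{r}$ and a measure estimate for the set so produced.

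\textit{Step 1: normalization.} Since $\bar B(c,R)$, $c=(0,\dots,0,R)$, meets $\mathrm{graph}(u)$ only at $0$ and $u$ is continuous with $u(0)=0$, $\nabla u(0)=0$, after shrinking $d$ I may assume that the graph of $u$ over $B(0,d)$ lies on or below the lower hemisphere $g(y):=R-\sqrt{R^{2}-|y|^{2}}$ of $\bar B(c,R)$ and meets it only at the origin; thus $u<g$ on $B(0,d)\setminus\{0\}$, and the radius-$r$ ball $\bar B((0,\dots,0,r),r)\subseteq\bar B(c,R)$ already supports $\mathrm{graph}(u)$ from above at $0$, so $0\in X_{r}$.

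\textit{Step 2: rolling a ball onto the graph.} For $z$ with $0<|z|<R-r$ put $c_{z}=\bigl(z,\ R-\sqrt{(R-r)^{2}-|z|^{2}}\bigr)$, so that $\bar B(c_{z},r)\subseteq\bar B(c,R)$; since the unique radius-$r$ subball of $\bar B(c,R)$ containing $0$ is internally tangent there (which forces $z=0$), $\bar B(c_{z},r)$ is disjoint from $\mathrm{graph}(u)$. I then push this ball straight down until it first meets the graph, i.e.\ replace the last coordinate of $c_{z}$ by
$$t^{*}(z)=\max\bigl\{\,u(z+w)+\sqrt{r^{2}-|w|^{2}}\ :\ |w|\le r,\ z+w\in B(0,d)\,\bigr\},$$
and let $x(z)\in B(0,d)$ be a maximizer. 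The sphere $S\bigl((z,t^{*}(z)),r\bigr)$ passes through $(x(z),u(x(z)))$, its open ball misses $\mathrm{graph}(u)$ (this is the first contact), and its centre lies above the graph over its own projection because the lower hemisphere of the ball dominates $u$ with equality at $x(z)$ (so $t^{*}(z)-r\ge u(z)$); hence $x(z)\in X_{r}$. Two further facts are needed. First, $x(z)\to0$ as $z\to0$: the bound $u\le g$ together with the identity
$$\sqrt{R^{2}-|\xi|^{2}}-\sqrt{r^{2}-|\xi|^{2}}=\frac{R^{2}-r^{2}}{\sqrt{R^{2}-|\xi|^{2}}+\sqrt{r^{2}-|\xi|^{2}}}\ \ge\ R-r$$
pins the maximizer near $0$. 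Second, at points where $u$ is differentiable the maximizer is interior and obeys $z=x(z)-r\,\nabla u(x(z))\bigl(1+|\nabla u(x(z))|^{2}\bigr)^{-1/2}$, from which (using $|\nabla u(x)|\le C|x|/R$ near $0$, itself a consequence of convexity and $u\le g$) one gets $|x(z)|\le(1+o(1))\tfrac{R}{R-r}|z|$ as $z\to0$.

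\textit{Step 3: the measure estimate (the crux).} It suffices to prove $m_{n}(X_{r}\cap B(0,\varepsilon))\ge\bigl((R-r)/2R\bigr)^{n}m_{n}(B(0,\varepsilon))$ for all small $\varepsilon$. Since $X_{r}$ contains the image of the contact map $z\mapsto x(z)$ on $\{|z|<\rho\}$, and by Step 2 that image lies in a ball of radius $\sim\tfrac{R}{R-r}\rho$, it is enough to bound the image below by the volume of a ball of radius $\rho/2$. Where $u$ is twice differentiable, the implicit relation of Step 2 shows the contact map is volume-nondecreasing at the contact points --- maximality of $t^{*}(z)$ forces $D^{2}u(x(z))\le\tfrac1r I$, so $x\mapsto x-r(1+|\nabla u|^{2})^{-1/2}\nabla u(x)$ has Jacobian determinant $\le1$ there --- and the area formula settles the smooth case with room to spare. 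The genuine obstacle is that $u$ is merely convex: $\nabla u$ is only a monotone $BV$ map, the contact point can ``stick'' (be the image of a whole range of parameters) along creases of $u$, and everything must be run a.e. The plan for this is either to approximate $u$ from above by smooth strictly convex functions (checking that the tangent-ball hypothesis and membership in $X_{r}$ survive the limit) or to argue directly with a Besicovitch-type covering adapted to the monotone map $\mathrm{id}-r(1+|\nabla u|^{2})^{-1/2}\nabla u$; the controlled multiplicity loss in that step is precisely what replaces the naive exponent base $(R-r)/R$ by $(R-r)/2R$. I expect this covering/limiting argument --- together with the careful bookkeeping relating $\rho$, $\tfrac{R}{R-r}\rho$ and $\varepsilon$ --- to be the genuinely difficult part; the rest is largely elementary spherical geometry and convexity.
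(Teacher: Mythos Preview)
Your overall picture --- lower balls of radius $r$ onto the graph and track the contact points --- is exactly the paper's strategy, and your Steps 1--2 parallel it. But Step 3, which you correctly identify as the crux, is where the proposal diverges from the paper and remains a genuine gap.

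The paper does \emph{not} attack the measure estimate via the Jacobian of the contact map $z\mapsto x(z)$, smoothness, approximation, or Besicovitch coverings. Instead it reverses direction: define $\gamma:\ y=(x,u(x))\mapsto c'$, sending a contact point to the center of its supporting $r$-ball. The key observation is that $\gamma$ is $1$-Lipschitz, by a two-line convex-geometry argument: if $y_1,y_2$ are contact points with centers $c_1',c_2'$, then both $y_i$ lie outside the convex hull of $B(c_1',r)\cup B(c_2',r)$ and on opposite sides of the slab bounded by the hyperplanes through $c_1',c_2'$ orthogonal to $\overline{c_1'c_2'}$, so $|c_1'-c_2'|\le|y_1-y_2|$. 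This single inequality replaces your entire Step~3: once $B(0,\delta)\subset P\gamma(Z\cap U)$ for an appropriate set $U$ of contact points, the Lipschitz bound on $P\gamma$ immediately gives $H^n(Z\cap U)\ge m_n(B(0,\delta))$, with no regularity on $u$ needed beyond convexity.

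You are also missing the paper's localization device. Rather than trying to prove directly that $x(z)\to 0$ and that $|x(z)|\lesssim\tfrac{R}{R-r}|z|$ (your argument for this is sketchy, and the gradient bound you invoke is not justified), the paper builds auxiliary convex functions $v_\alpha$ that agree with $u$ on $B(0,R\sin 2\alpha)$ and equal a cone-plus-sphere function $k_\alpha$ outside. Lowering balls onto $v_\alpha$ forces the contact to occur either on $\mathrm{graph}(u)$ or on the explicit surface $C_\alpha\cup T_\alpha$; an elementary trigonometric computation shows that if $|z|<\delta=(R-r)\tan\alpha$ the contact cannot be on $C_\alpha\cup T_\alpha$, hence lies on $\mathrm{graph}(u)$ inside $B(0,\varepsilon)$ with $\varepsilon=R\sin 2\alpha$.

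Finally, your account of the factor $2$ is incorrect. It is not a multiplicity loss from a covering argument; it is the limit of the purely geometric ratio
\[
\frac{\delta}{\varepsilon}=\frac{(R-r)\tan\alpha}{R\sin 2\alpha}=\frac{R-r}{2R\cos^2\alpha}\ \xrightarrow[\alpha\to 0]{}\ \frac{R-r}{2R},
\]
which is where the exponent base $(R-r)/2R$ in the lemma actually originates.
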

As will be seen in more detail in section 2, there is an inverse relationship between the bound on $K(u,x)$ and the radius of the sphere of support to the graph of $u$ at $(x,u(x))$. This will explain the similarity between the lower bound on density given in the lemma and the one in the theorem.\\

The geometric characterization of $K(u,x)$ is key to proving Theorem 1.3 and helpful in understanding what quality this generalized derivative captures about the function $u$ and its graph. Since the results here concern functions that are at least locally convex, it is natural to study them via the Legendre--Fenchel transform, the classical transform of convex analysis. By definition, the set of points above the graph of a convex function (epigraph) is a convex set. Any convex set in $\mathbb{R}^n$ can be defined entirely by a family of supporting hyperplanes. Thus, since the epigraph of $u$ completely determines the graph of $u$, which in turn completely determines $u$, this family of hyperplanes can be considered an alternative description or parametrization of $u$. This is essentially how the transform of $u$ (or dual function to $u$) $u^*$ is defined. Each point $p\in \mathbb{R}^n$ defines a collection of hyperplanes (via gradient), and $u^*$ specifies a point $u^*(p)\in\mathbb{R}$, such that $(0,...,0,-u^*(p))\in \mathbb{R}^n$ lies on the one hyperplane of this collection which supports the epigraph (or graph) of $u$.

Interestingly, under the Legendre--Fenchel transform, differentiability properties of $u$ correspond to convexity properties of $u^*$. Two classic examples of this are the following.
\begin{proposition}{1.8}
\textit{Let $f:\mathbb{R}^n\rightarrow \mathbb{R}$. Then}\\
\textit{(i) $f \text{ is strictly convex if and only if } f^* \text{ is differentiable} .$}\\
\textit{(ii) $f \text{ is strongly convex with modulus } c \text{ if and only if } f^* \text{ is differentiable and }  \nabla f^* \text{ is Lipschitz continuous }$}\\  \textit{with constant} $\frac{1}{c}.$
\end{proposition}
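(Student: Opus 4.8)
The plan is to reduce both equivalences to the single structural fact underlying the Legendre--Fenchel transform: the \emph{Fenchel--Young equality}
$$f(x) + f^*(p) = \langle p, x\rangle \iff p \in \partial f(x) \iff x \in \partial f^*(p),$$
valid for a closed proper convex $f$ (here $f:\mathbb{R}^n\to\mathbb{R}$ is finite and convex, hence continuous, hence closed), i.e.\ the subdifferential $\partial f^*$ is the inverse multifunction of $\partial f$. I would combine this with the standard dictionary between differentiability and subdifferentials: a convex function $g$ is differentiable at $q$ if and only if $\partial g(q)$ is a singleton, in which case $\partial g(q)=\{\nabla g(q)\}$. Thus ``$f^*$ is differentiable at $p$'' becomes ``there is at most one $x$ with $p\in\partial f(x)$'', and the whole proof is then about showing that this single-valuedness is governed precisely by strict, respectively strong, convexity of $f$.

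For part (i): I would observe that $f^*(p)=\sup_x(\langle p,x\rangle - f(x))$ and that $x$ attains this supremum iff $p\in\partial f(x)$. If $f$ is strictly convex then $x\mapsto\langle p,x\rangle - f(x)$ is strictly concave, so it has at most one maximizer; hence $\partial f^*(p)$ has at most one element for every $p$ and $f^*$ is differentiable (on the interior of its domain). Conversely, if $f$ is not strictly convex there are $x_0\neq x_1$ and $t\in(0,1)$ with $f(tx_0+(1-t)x_1)=tf(x_0)+(1-t)f(x_1)$, so $f$ is affine on $[x_0,x_1]$ with some slope $p_0$; then $x_0,x_1\in\partial f^*(p_0)$, so $\partial f^*(p_0)$ is not a singleton and $f^*$ is not differentiable at $p_0$.

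For part (ii): I would use the first-order characterization that $f$ is $c$-strongly convex iff its subdifferential is $c$-strongly monotone, i.e.\ $\langle p-q, x-y\rangle\geq c|x-y|^2$ whenever $p\in\partial f(x)$, $q\in\partial f(y)$. Feeding in the inversion $x\in\partial f^*(p)$, $y\in\partial f^*(q)$ and Cauchy--Schwarz gives $c|x-y|^2\leq\langle p-q,x-y\rangle\leq|p-q|\,|x-y|$, whence $|x-y|\leq\tfrac1c|p-q|$ for all selections; taking $p=q$ forces $\partial f^*(p)$ single-valued, so $f^*$ is differentiable and the inequality reads $|\nabla f^*(p)-\nabla f^*(q)|\leq\tfrac1c|p-q|$. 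For the converse, if $f^*$ is differentiable with $\tfrac1c$-Lipschitz gradient, a Baillon--Haddad type co-coercivity estimate $\langle\nabla f^*(p)-\nabla f^*(q),p-q\rangle\geq c|\nabla f^*(p)-\nabla f^*(q)|^2$ (proved by applying the Lipschitz bound to the convex function $p\mapsto f^*(p)-\langle\nabla f^*(q),p\rangle$ and minimizing), read back through $\partial f=(\partial f^*)^{-1}$, says exactly that $\partial f$ is $c$-strongly monotone; invoking $f=f^{**}$ closes the loop.

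The main obstacle is not any single computation but the bookkeeping of hypotheses and domains: the bare statement for $f:\mathbb{R}^n\to\mathbb{R}$ is cleanest when differentiability claims about $f^*$ are restricted to $\operatorname{int}(\operatorname{dom} f^*)$ (or one adds coercivity so that $\operatorname{dom} f^*=\mathbb{R}^n$), and the distinction between strict convexity and \emph{essential} strict convexity, and between differentiability and \emph{essential} smoothness, must be handled to make the ``if and only if'' literally correct. I would either impose the mild standing hypotheses that make the clean statement true or state each equivalence ``on the interiors of the relevant domains''. Once those points are settled, both equivalences follow formally from the inversion $\partial f^*=(\partial f)^{-1}$ together with the singleton-versus-differentiable dictionary.
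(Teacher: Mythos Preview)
Your proposal is a correct and standard route to these classical equivalences, but there is nothing to compare against: the paper does not prove Proposition~1.8. It is stated in the introduction as a pair of ``classic examples'' of the differentiability/convexity duality under the Legendre--Fenchel transform, and is only recalled again in \S3.1 as background. No argument is given anywhere in the paper.

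Your sketch via the inversion $\partial f^*=(\partial f)^{-1}$, the singleton-subdifferential criterion for differentiability, and the strong-monotonicity/Baillon--Haddad argument for part~(ii) is exactly the approach found in the references the paper cites (Rockafellar, Hiriart-Urruty--Lemar\'echal). Your caveat about domains is also well taken: as literally stated for an arbitrary $f:\mathbb{R}^n\to\mathbb{R}$, the equivalences require the usual qualifications (essential strict convexity versus essential smoothness, restriction to $\operatorname{int}(\operatorname{dom} f^*)$, or a coercivity hypothesis). The paper glosses over these points since the proposition is quoted only for motivation.
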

Given that $K(u,x)$ is a (local) differentiability property of $u$, it seems there should be an appropriate (local) convexity property corresponding to $u^*$. In section 3 we prove the following result.
\begin{theorem}{1.9}
\textit{Let $f:\mathbb{R}^n \rightarrow \mathbb{R}$ be convex. If $K(f,x_0)=k_0<k$ then $f^*$ is quadratically convex at $y_0=\nabla f(x_0)$ with modulus $\frac{1}{k}$. Conversely, if $u^*$ is quadratically convex with modulus $\frac 1k$, then $K(f,x_0)=k_0\leq k$}.
\end{theorem}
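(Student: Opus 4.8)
The plan is to translate the defining inequality behind $K(f,x_0)\le k$ into a statement about the conjugate $f^*$ by exploiting the standard identity $f(x)+f^*(y)=\langle x,y\rangle$ precisely when $y\in\partial f(x)$, together with the fact that the Legendre--Fenchel transform exchanges the roles of $x$ and $y=\nabla f(x)$. Throughout I work near the conjugate pair $(x_0,y_0)$ with $y_0=\nabla f(x_0)$, so that $f^*(y_0)=\langle x_0,y_0\rangle-f(x_0)$ and $\nabla f^*(y_0)=x_0$. The first step is to fix a precise working definition of ``quadratically convex at $y_0$ with modulus $\frac1k$'': namely that $f^*(y)-f^*(y_0)-\langle x_0,y-y_0\rangle\ge \frac1{2k}|y-y_0|^2$ for $y$ near $y_0$ (a one-sided, infinitesimal version of strong convexity localized at the single point $y_0$, parallel to Proposition 1.8(ii) but only requiring the quadratic lower bound at $y_0$). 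With that in hand the theorem becomes a local duality between an upper quadratic bound for $f$ at $x_0$ and a lower quadratic bound for $f^*$ at $y_0$.

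For the forward direction, assume $K(f,x_0)=k_0<k$. By Proposition 1.6(ii), for every $K$ with $k_0<K<k$ there is $\varepsilon>0$ with
\[
f(x)-f(x_0)-\langle\nabla f(x_0),x-x_0\rangle\le \tfrac12 K|x-x_0|^2
\]
for $|x-x_0|<\varepsilon$. I then pass to conjugates: writing $g(x):=f(x_0)+\langle y_0,x-x_0\rangle+\frac12 K|x-x_0|^2$, one has $f\le g$ locally, hence (after localizing, e.g. by adding a large quadratic well outside the ball so the comparison is global without changing behavior near $x_0$) $f^*\ge g^*$ near $y_0$. Since the conjugate of the quadratic $g$ is again an explicit quadratic with leading coefficient $\frac1{2K}$ and vertex at $y_0$ (value $\langle x_0,y_0\rangle-f(x_0)=f^*(y_0)$, gradient $x_0$), this yields
\[
f^*(y)-f^*(y_0)-\langle x_0,y-y_0\rangle\ge \tfrac1{2K}|y-y_0|^2
\]
near $y_0$. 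Letting $K\downarrow k_0$ gives the bound with constant $\frac1{2k_0}\ge\frac1{2k}$, so $f^*$ is quadratically convex at $y_0$ with modulus $\frac1k$ (indeed with the better modulus $\frac1{k_0}$).

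For the converse, assume $f^*$ satisfies the quadratic lower bound at $y_0$ with modulus $\frac1k$. Running the same conjugation in reverse, the quadratic minorant of $f^*$ dualizes to a quadratic majorant of $f$ at $x_0$ with leading coefficient $\frac12 k$:
\[
f(x)-f(x_0)-\langle y_0,x-x_0\rangle\le \tfrac12 k|x-x_0|^2
\]
near $x_0$. Feeding this into Definition 1.1 immediately gives $K(f,x_0)\le k$; and since $K(f,x_0)<\infty$ (the displayed bound certainly forces finiteness), Proposition 1.6(i) identifies it, where a Hessian exists, with the top eigenvalue, so $k_0:=K(f,x_0)\le k$ as claimed. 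The main obstacle is the care needed at the localization/biconjugation step: $f\le g$ only \emph{near} $x_0$, and $f^{**}=f$ only for the (closed, proper) convex $f$, so I must justify that passing to conjugates respects the local inequalities — the cleanest route is to note that both $K(f,x_0)$ and the quadratic-convexity condition at $y_0$ depend only on germs, replace $f$ by a globally defined convex function agreeing with it near $x_0$ and growing quadratically at infinity, and check that this modification leaves $(x_0,y_0)$, the subgradient relation, and the local quadratic bounds untouched; the rest is the elementary computation of conjugates of quadratics.
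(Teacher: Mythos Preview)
Your approach is essentially the same as the paper's: both directions are obtained by applying the order-reversing Legendre--Fenchel transform to the local quadratic majorant/minorant, using the standard identities $f^*(y_0)=\langle x_0,y_0\rangle-f(x_0)$ and $x_0\in\partial f^*(y_0)$. Two small remarks. First, the intermediate $K\in(k_0,k)$ and the limit $K\downarrow k_0$ are unnecessary---the paper simply works with $k$ directly, and your parenthetical claim of the ``better modulus $\tfrac{1}{k_0}$'' is not quite justified as stated, since the neighborhood on which the bound holds may shrink to a point as $K\downarrow k_0$ (Definition 3.4 requires a fixed $\varepsilon>0$); the conclusion for modulus $\tfrac{1}{k}$ is of course fine by taking any single $K<k$. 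Second, your explicit attention to the localization issue (that $f\le g$ holds only near $x_0$) is a point the paper's proof glosses over; your proposed fix is valid, though a lighter route is to observe that for $y$ close to $y_0$ the supremum defining $g^*(y)$ is attained at $x_0+\tfrac{1}{K}(y-y_0)\in B(x_0,\varepsilon)$, so $f^*(y)\ge\sup_{|x-x_0|<\varepsilon}(\langle y,x\rangle-f(x))\ge\sup_{|x-x_0|<\varepsilon}(\langle y,x\rangle-g(x))=g^*(y)$ on that neighborhood.
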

Quadratically convex at $y_0$, which is defined in section 3, is a more local form of convexity than the two types of convexity referred to in Proposition 1.8. This dual characterization of $K(u,x)$ allows for an alternative proof of Proposition 1.6. Using quadratics to define different types of convexity is standard (e.g. quasi-convexity, strong convexity). See section 3 for definitions of all these terms and a more detailed discussion.

In Slodkowski's proof quadratics arise naturally via the definition of $K(u,x)$, and from this, spheres. The geometric properties of spheres make certain arguments very clear (see proof of Lemma 1.7), however some manipulations and calculations are simpler with quadratics, given their constant second-order behavior. For example, in [5] Harvey and Lawson provide an alternative proof of Slodkoski's lemma (as well as Alexandrov's theorem stated above) via a generalization by using quadratics instead of spheres. Their proof is modelled off of Slodkowski's, and they obtain their result for the larger class of quasi-convex functions. Instead of spheres of support, they use the notion of upper contact jets, where given $p\in \mathbb{R}^n,$ and $A$ a real symmetric $n\times n$ matrix, $(p,A)$ is an \textit{upper contact jet for $u$ at $x$} if there exists a neighbourhood of $x$ such that 
$$u(y)\leq u(x) +\langle p,y-x \rangle + \frac{1}{2}\langle A(y-x), y-x\rangle.$$
Slodkowski's result then corresponds to $A=\lambda I$.

\subsection{1.4 Organization}
Section 2 contains the exposition of Slodkowski's proof of Theorem 1.3: \S 2.1 gives an overview of the proof, \S 2.2 a slight variation of Slodkowski's proof to Proposition 1.6 (the generalized $C^{1,1}$ estimate), \S 2.3 an expanded and illustrated version of Slodkowski's proof to Lemma 1.7, and \S 2.4 combines these for the proof of the theorem.\\
Section 3 studies $K(u,x)$ from the dual perspective: \S 3.1 recalls some basic convex analysis, including Legendre--Fenchel duality, \S 3.2 provides an equivalent interpretation of $K(u,x)$ in terms of the dual function to $u$, and uses this for an alternative proof of the $C^{1,1}$ estimate.\\
The Appendix considers Lipschitz continuity of the gradient and the geometric interpretation of $K(u,x)$: \S A.1 demonstrates $K(u,x)$ is bounded by the Lipschitz constant when $u$ is $C^{1,1}$, \S A.2 gives an example of a function with a sphere of support that is not $C^{1,1}$ on any neighbourhood, \S A.3 compares $K(u,x)$ to the classical notion of an osculating circle to a plane curve and gives an extension of this to higher dimensions, \S A.4 relates the radius of a sphere of support to a function to that of the radius of a supporting sphere to its dual.
\section{2. Exposition of Slodkowski's proof}
\subsection{2.1 Overview}
Theorem 1.3 is concerned with the set of points (near $x_0$) such that $K(u,x)<k$, for some fixed $k>k_0= K(u,x_0)$. However this set may be difficult to study directly given that the only information available about $u$ is that it is continuous (bounded and convex) on some neighbourhood of $x_0$ and $K(u,x_0)=k_0<\infty$. In particular, knowing the value of $K(u,x)$ at a given point does not immediately suggest anything about its value nearby.
Thus, the first step towards a better understanding of this set of points is an alternative characterization of what it means for $K(u,x)$ to bounded at some point.

If at the point $x$, $K(u,x_0)<\infty$ this is equivalent to a (local) sphere of support from above to the graph of $u$ at $(x,u(x))$. This is precisely what Proposition 1.6 (ii) and (iii) states. (ii) implies the existence (locally) of a quadratic function tangent to the graph of $u$ at $(x,u(x))$ which majorizes $u$ on some neighbourhood, and this in turn implies the (local) existence of a sphere of support to the graph of $u$ at $(x,u(x))$. The content of (iii) is clear.

With this alternative geometric characterization in hand, Lemma 1.7 then proves the theorem in terms of these spheres of support. To accomplish this another change in perspective is needed, which takes further advantage of this more geometric interpretation of $K(u,x)$. Instead of looking at points $x$ in the domain of $u$ such that there exists a sphere of support to the graph of $u$ at $(x,u(x))$, it is better to consider for each point $x$ in domain of $u$ an $n-$sphere (of fixed radius) in $\mathbb{R}^{n+1}$ above the graph of $u$ with center $c\in \mathbb{R}^{n+1}$ such that $P(c)=x$, where $P:\mathbb{R}^{n+1}\rightarrow \mathbb{R}^{n}$ is the projection map. If we lower this sphere down towards $x$ it will of course eventually intersect the graph of $u$. Since $u$ is continuous, it is not difficult to show that on a small enough neighbourhood these spheres will come down on a closed part of the graph of $u$ and thus there will be an initial point of contact. This sphere is by definition a sphere of support to the graph of $u$ at that point. The next step is to show that for every $\epsilon$ neighbourhood of 0 ($x_0=0$ for Lemma 1.7) there is a corresponding $\delta=\delta(\epsilon)$ such that the spheres above the points in $B(0,\delta)$ are spheres of support to the graph at points $(x,u(x))$, where $x\in B(0,\epsilon)$. Now $B(0,\delta)$ is a much nicer set to work with then $X_r\cap B(0,\epsilon)$, and these two sets can be related by a few simple Lipschitz maps. Since Lipschitz maps behave nicely with respect to measures, this allows us to place a lower bound on the measure $m(X_r\cap B(0,\epsilon))$ for each epsilon. A limiting argument is then used to obtain the lower bound on the lower density at 0.

Proposition 1.6 and Lemma 1.7 can then be combined to give Theorem 1.3. A sketch of the proof is as follows. Start with a point $x_0$ where $K(u,x_0)$ is finite (hypothesis of Theorem 1.3), and choose any $k>K(u,x_0)$. Note it can be assumed without loss of generality that $x_0=0$, $u(0)=0$, and $\nabla u(0)=0$ (see section 2.3 for details). Then apply Proposition 1.6 (ii), which locally gives a sphere of support of radius $1/k$ at $(x_0,u(x_0))$. Now, apply Lemma 1.7 to get a lower bound on the density of $X_r$, $r<1/k$, at $x_0$. Next, apply Proposition 1.6 (iii) to convert this into a statement about the density of $X'_k$, where
\begin{align*}
X'_k \equiv \{ x\in \text{dom}(u) | K(u,x)<k \}.
\end{align*}
This last step is accomplished by using the continuity of the gradient to show that in a small enough neighbourhood $ X_r \subset X'_k $. More explicitly, $x\in X_r$ implies $K(u,x)\leq r^{-1}(1+|\nabla u(x)|^2)^{3/2}$ and $\nabla u(x_0)=0$, so by continuity of the gradient of convex functions and since $k>1/r$, $\nabla u(x)$ will eventually be small enough so that $r^{-1}(1+|\nabla u(x)|^2)^{3/2}<k$. Thus, for $x\in X_r$, $K(u,x)<1/k$. This gives the theorem by choosing $R$ arbitrarily close to $1/k_0$ and $r$ arbitrarily close to $1/k$ (see section 2.4 for a detailed proof).
\subsection{2.2. The generalized $C^{1,1}$ estimate}
\begin{figure}[H]
\centering
\begin{tikzpicture}

\draw[thick, ->] (0,0) -- (5,0); 
\draw[thick, ->] (0,0) -- (0,4);

\draw[thick] (0,.5) parabola (4,2.5) node[anchor=south west] {$u$}; 
\draw[thick,dashed] (-.05,2.4) -- (.05,2.4);

\draw[thick, dashed] (2,2.4) -- (2,0) node[anchor=north] {};
\draw[thick, dashed] (2.65,1.4) -- (2.65,-.005) node[anchor=north west ] {};
\draw[thick, dashed] (3.1,1.9) -- (3.1,0);
\draw[thick, dashed] (3.1,.05) -- (3.1,0);

\draw[thick, gray] (2,2.4) -- (3.1,1.9) node[anchor=south east] {$r$}; 

\node at (3.35,.06) [label=below:\small{$x+h$}]{} ; 
\node at (2.65,-.025) [label=below:\small{$x$}]{} ;
\node at (2,-.025) [label=below:\small{$c$}]{} ;
\node at (-.25,2.7) [label=below:\small{$t$}]{} ;
\node at (0,4.2) [label=left:\small{$\mathbb{R}^n$}]{} ;
\node at (5,0) [label=below:\small{$\mathbb{R}$}]{} ;

\coordinate (center) at (2,2.4); 
\cercle{center}{1.2cm}{180}{180};
\node at (1.25,2.25) [label=below:\small{$d$}]{} ;

\end{tikzpicture}

\caption{$d:B(c,r)\rightarrow \mathbb{R}$}
\end{figure}
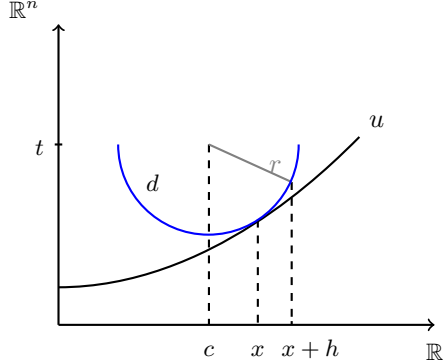
In this subsection we provide an alternative proof to Proposition 1.6 (iii). The main idea is as follows: given a sphere of support of radius $r$ to the graph of $u$ at the point $(x,u(x))$, the lower hemisphere of this sphere defines the graph of a smooth convex function that agrees up to first order with $u$ at $x$ and majorizes $u$ elsewhere. Denote this function by $d$. It immediately follows that $K(u,x)\leq K(d,x)$, and the rest of the proof consists in computing $K(d,x)$, which is equal to the largest eigenvalue of $d$ because $d$ is smooth \\
\textit{Proof of Proposition 1.6 (iii).} Assume that the sphere $S((c,t),r), \hspace{1mm} c\in \mathbb{R}^{n}$ supports the graph of $u$ from the above at $(x,u(x_0))$ and that $u$ is differentiable at $x_0$.
Define $d:B(c,r)\rightarrow \mathbb{R}$ to be the function whose graph is the lower open hemisphere of $S((c,t),r).$ 
Recall the definition for $K(f,x_0)$ :
$$K(u,x_0) := \limsup_{\epsilon \rightarrow 0} 2\epsilon^{-2} \text{ max } \{ u(x_0 + \epsilon h)  - u(x_0) - \epsilon \langle \nabla u(x_0), h \rangle : |h| =1 \}.$$\\
Clearly, since $d(x_0)=u(x_0)$ and $\nabla d(x_0)=\nabla u(x_0)$,
$$K(u,x_0)\leq K(d,x_0).$$
Since $d$ is smooth,
\begin{align*}
K(d,x_0) =& \limsup_{\epsilon \rightarrow 0} 2\epsilon^{-2} \text{ max } \{ d(x_0 + \epsilon h)  - d(x_0) - \epsilon \langle \nabla d(x_0), h \rangle : |h| =1 \} \\
=& \limsup_{\epsilon \rightarrow 0} 2\epsilon^{-2} \text{ max } \{ \frac{1}{2} \langle \nabla^2 d(x_0+ \gamma_{\epsilon,h} \epsilon h)\epsilon h, \epsilon h \rangle : |h| =1 \}, \hspace{5mm}0<\gamma_{\epsilon,h} <1 \\
=& \limsup_{\epsilon \rightarrow 0}  \text{ max } \{ \langle \nabla^2 d(x_0+ \gamma_{\epsilon,h} \epsilon h) h,  h \rangle : |h| =1 \}, \hspace{5mm}0<\gamma_{\epsilon,h} <1 \\
=& \text{ max } \{ \langle \nabla^2 d(x_0) h,  h \rangle : |h| =1\} \text{ by continuity and compactness.}\\
=& \lambda_\text{max}, \hspace{5mm} \text{ maximum eigenvalue of } \nabla^2 d(x_0)
\end{align*} 
Thus, now we show that $$\lambda_\text{max}=\dfrac{(1+(\nabla u(x_0))^2)^{\frac{3}{2}}}{r}.$$\\
The equation for $d$, the sphere of radius $r$ centered at $(c,t)$, where $c\in \mathbb{R}^n$ and $t\in \mathbb{R}$, is $$d(x)= t -\sqrt{r^2 -|c-x|^2}.$$ Without loss of generality we may assume that the sphere of support is centered at the origin and $x_0$ has just first component non-zero, as otherwise we could always shift and then rotate without affecting the second-order behavior. In other words, assume $(c,t)=0 \in \mathbb{R}^{n+1}$ and $x_0=(s_1,...,s_n)=(s,0,...,0)\in \mathbb{R}^n$. Then $d(x)= -\sqrt{r^2 -|x|^2}.$ 

Let $$w(x):= \frac{1}{r^2-s^2} \left( |x-x_0|^2 + 2\langle x-x_0,x_0 \rangle  \right).$$ Since $$|x|^2=\langle x,x \rangle= \langle (x-x_0)+x_0, (x-x_0)+x_0 \rangle= |x-x_0|^2 +2 \langle x_0, x-x_0 \rangle +|x_0|^2$$ and $|x_0|^2=s^2$, we can write $d(x)$ as 
$$d(x)=-\sqrt{r^2-s^2}\sqrt{1-w(x)}.$$
Now expanding $\sqrt{1-w(x)}$ as a series and dropping the terms of order higher than two (as they will have 0 Hessian at $x_0$), $$d(x)\approx -\sqrt{r^2-s^2} \left( 1-\frac{w(x)}{2} -\frac{w(x)^2}{8}  \right).$$ This can be further reduced to $$d(x)\approx -\sqrt{r^2-s^2} \left( 1-\frac{w(x)}{2} -\frac{1}{8}\left( \frac{2\langle x-x_0,x_0\rangle }{r^2-s^2}\right)^2  \right),$$ since we are only concerned with the expression for $d$, modulo powers higher than two.

Thus, $d(x)$ has been replaced by a diagonal quadratic form and straightforward computations give $$\nabla d(x_0)= \frac{x_0}{\sqrt{r^2-s^2}},$$ and $$\nabla^2d(x_0)= \frac{1}{\sqrt{r^2-s^2}} I + \frac{s}{(r^2-s^2)^{3/2}} A,$$ where $I$ is the $n\times n$ identity matrix and $A$ is the $n\times n$ matrix with first row $x_0=(s,0,...,0)$ and zeros elsewhere. Since $$\frac{s^2}{(r^2-s^2)^{\frac{3}{2}}}>0,$$
it follows immediately that $$\lambda_{max}=\frac{1}{\sqrt{r^2-s^2}} + \frac{s^2}{(r^2-s^2)^{3/2}}=\frac{r^2}{(r^2-s^2)^{\frac{3}{2}}}.$$
Furthermore, the vector $(x_0,u(x_0))$ is of length $r$, proportional to the upward pointing unit normal to the graph of $u$ at $(x_0,u(x_0))$, which is equal to 
$$\frac{1}{\sqrt{1+|\nabla u(x_0)|^2}}(-\nabla u(x_0),1).$$
Scaling by $r$, we obtain
$$x_0=\frac{-r}{\sqrt{1+|\nabla u(x_0)|^2}}\nabla u(x_0).$$
Giving $$x_0=\dfrac{r \nabla u(x_0)}{\sqrt{1+|\nabla u(x_0)|^2}}, \hspace{3mm} s^2=|x_0|^2=\dfrac{r^2|\nabla u(x_0^2)}{1+|\nabla u(x_0)|^2}.$$
Therefore,
$$\lambda_{max}=\dfrac{(1+|\nabla u(x_0)|^2)^\frac{3}{2}}{r}.$$
$\hspace{165mm} \square$

We state explicitly the following interesting result on \textquotedblleft lower hemisphere functions\textquotedblright, i.e. functions on a disc $D \subset \mathbb{R}^n$ defined by the lower hemisphere of an $n-$ sphere in $\mathbb{R}^{n+1}$. The proof follows immediately from the above proof, by looking at the expression for the Hessian.
\begin{proposition}{2.1}
Let $d:D\rightarrow \mathbb{R}$ be a lower hemisphere function defined on a disc $D\subset \mathbb{R}^n$ and $\tilde{x} \in D$. If $\nabla d(\tilde{x}) \neq 0$, then $\nabla d(\tilde{x}) \neq 0$ is an eigenvector of $\nabla ^2 d(\tilde{x})$ corresponding to the largest eigenvalue.
\end{proposition}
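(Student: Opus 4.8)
The plan is to read off everything from the explicit Hessian that has just been computed in the proof of Proposition 1.6 (iii). Write $d(x)=t-\sqrt{r^2-|x-c|^2}$ for the lower hemisphere function, where $c\in\mathbb{R}^n$ is the projection of the center of the defining sphere and $r$ its radius, and set $g(x):=r^2-|x-c|^2>0$ on $D$. A direct differentiation — the same one carried out above after the normalization $c=0$ — gives
\[
\nabla d(x)=\frac{x-c}{\sqrt{g(x)}},\qquad \nabla^2 d(x)=\frac{1}{\sqrt{g(x)}}\,I\;+\;\frac{1}{g(x)^{3/2}}\,(x-c)(x-c)^{\top}.
\]
Equivalently, this is exactly the displayed formula $\nabla^2 d(x_0)=(r^2-s^2)^{-1/2}I+s\,(r^2-s^2)^{-3/2}A$ once $c$ has been translated to the origin and one has rotated so that $x_0$ lies on the first coordinate axis (so that $sA=x_0x_0^{\top}$). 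Since a rotation $Q$ sends $\nabla d\mapsto Q^{\top}\nabla d$ and $\nabla^2 d\mapsto Q^{\top}(\nabla^2 d)\,Q$, whether $\nabla d$ spans the top eigenspace of $\nabla^2 d$ is unaffected by the normalization, so it is harmless to argue in these coordinates (or one may simply use the coordinate-free formula above throughout).

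Next I would diagonalize the right-hand side. The rank-one matrix $(x-c)(x-c)^{\top}$ has $x-c$ as an eigenvector with eigenvalue $|x-c|^2$ and annihilates the hyperplane $(x-c)^{\perp}$, while the term $g^{-1/2}I$ is isotropic. Hence $\nabla^2 d(\tilde x)$ has the eigenvalue $g(\tilde x)^{-1/2}+|\tilde x-c|^2\,g(\tilde x)^{-3/2}$ on the line $\mathbb{R}(\tilde x-c)$ and the eigenvalue $g(\tilde x)^{-1/2}$ on its orthogonal complement; a short simplification shows the former equals $r^2/g(\tilde x)^{3/2}$, matching $\lambda_{\max}$ from the previous proof.

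Finally, I would invoke the hypothesis: $\nabla d(\tilde x)\neq 0$ is equivalent to $\tilde x\neq c$, i.e. to $|\tilde x-c|^2>0$, which makes the first eigenvalue strictly larger than the second. Therefore $r^2/g(\tilde x)^{3/2}$ is the largest eigenvalue of $\nabla^2 d(\tilde x)$, its eigenspace is precisely $\mathbb{R}(\tilde x-c)=\mathbb{R}\,\nabla d(\tilde x)$, and $\nabla d(\tilde x)$ is an eigenvector for it, as claimed. I expect no real obstacle here beyond keeping the bookkeeping of the normalization (or, alternatively, of the coordinate-free rank-one decomposition) straight; the content is entirely linear-algebraic once the Hessian formula is in hand. $\square$
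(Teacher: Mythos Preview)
Your proof is correct and follows essentially the same approach as the paper: both read off the Hessian computed in the proof of Proposition~1.6(iii) and observe that it is a scalar multiple of the identity plus a nonnegative rank-one term in the direction $\tilde{x}-c\propto\nabla d(\tilde{x})$, so that direction carries the largest eigenvalue. The only cosmetic difference is that you phrase the rank-one decomposition coordinate-free via $(x-c)(x-c)^{\top}$, whereas the paper first normalizes to $\tilde{x}=(s,0,\dots,0)$ and writes the Hessian as an explicit diagonal matrix.
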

\begin{proof}
Without loss of generality we may assume that the lower hemisphere and thus $D$ are centered at the origin and $\tilde{x}$ has only first coordinate non-zero, $\tilde{x} =(s,0,...,0)$. Then, as shown above, the Hessian of $d$ at $\tilde{x}$ is a diagonal $n\times n$ matrix of the form
$$\nabla ^2 d(\tilde{x})= \textnormal{diag}\left(\frac{1}{\sqrt{r^2-s^2}}+\frac{s^2}{(r^2-s^2)^{\frac{3}{2}}},\frac{1}{\sqrt{r^2-s^2}},...,\frac{1}{\sqrt{r^2-s^2}}\right).$$
Thus, $(1,0,...,0)$ is an eigenvector corresponding to the largest eigenvalue. As calculated above,
$$\nabla d(\tilde{x})= \frac{\tilde{x}}{\sqrt{r^2-s^2}}= \frac{s}{\sqrt{r^2-s^2}} (1,0,...,0) ,$$ so clearly $\nabla d(\tilde{x})$ is also an eigenvector corresponding to the largest eigenvalue.

\end{proof}

\subsection{2.3 The density lemma} 
If at the point $x_0=0$ there is a sphere of support of radius $R$, Lemma 1.7 provides a lower bound on the lower density of the set $X_r$ of points with sphere of support of a radius $r<R$. Note that without loss of generality it may be assumed that $x_0=0$, $u(0)=0$, and $\nabla u(0)=0$, since any convex function \textit{\~{u}} can always be adjusted by a constant and linear term so that this is true without affecting the 2nd-order behaviour of \textit{\~{u}}.

As mentioned in section 2.1, Lemma 1.7 is proved by looking not directly at $X_r$ but at small neighbourhoods of 0 that are the projection of the set of centers of spheres of support to the graph of $u$ on shrinking neighbourhoods. For each $\epsilon>0$ a $\delta=\delta(\epsilon)$ is needed so that $B(0,\delta)$ is contained in the projection onto $\mathbb{R}^n$ of the set of centers of spheres of support to the graph of $u$ restricted to an epsilon neighbourhood. Since the only information about $u$ is that there is a sphere of support at 0, this is what is used to construct $\epsilon$ and $\delta$. 
More specifically, the appropriate $\epsilon$'s and $\delta$'s are found by constructing a family of convex functions that are identical to $u$ on a neighbourhood of $0$, but greater and simpler outside this neighbourhood. This allows one to fully utilize the only initial information given. Using this family of simple functions and basic geometry, three key set inclusions are obtained, which essentially relate $B(0,\delta(\epsilon))$ to $X_r\cap B(0,\epsilon)$. Then using Lipschitz maps to relate these sets and by applying properties of Lipschitz functions on measure, the lower density bound is shown. This whole construction is crucial because it provides a much simpler approach to studying the possibly very complex set $X_r$. The following is the proof given by Slodkowski.

\begin{proof} \textit{of Lemma 1.7.}\\
The number $r \in (0,R)$ will be kept fixed so let $X \equiv X_{r}$. Define $$Z= \{(x,u(x))\in \mathbb{R}^{n+1}:x\in X\}.$$ It is clear that $Z\cap(\text{\={B}}(0,d')\times\mathbb{R})$ is compact for every $d'<d$, thus $X\cap(\text{\={B}}(0,d')\times\mathbb{R})$ is also compact, as it is the orthogonal projection $P:\mathbb{R}^{n+1}\rightarrow \mathbb{R}^{n}$ of $Z$. Since compact sets are Lebesgue measurable, the notion of lower density is applicable to both $X$ and $Z$.

It is more convenient to first estimate the density of $Z$ at 0 with respect to Hausdorff measure, and then use the properties of Lipschitz functions on measure to obtain bounds on the density of $X$. To accomplish this a family of convex functions, built from the initial sphere of support of radius $R$ at 0, which modify $u$ outside a small neighbourhood of 0 will be constructed. As mentioned above, these functions will be identical to $u$ on a neighbourhood of $0$ and very simple outside this neighbourhood. These functions will enable us to find a corresponding $\delta=\delta(\epsilon) $ neighbourhood for each $\epsilon$ so that $x\in B(0,\delta)$ implies that $x=P(c)$, where $c\in \mathbb{R}^n$ is the center of a sphere of support to $(x',u(x'))$, for some $x'\in B(0,\epsilon) \cap X_r$.\\
\textbf{\textit{Step One.}} A family of convex functions is constructed which will let us find an appropriate $\delta(\epsilon)$, as explained above. For each $\alpha$ such that $0<\alpha<\frac{1}{2}\arcsin (\frac{d}{R})$, define the function 
$$
v_{\alpha}:B(0,R)\rightarrow[0,\infty),
$$
as follows. First, define
\begin{align}
Y=\{y\in \mathbb{R}^{n+1}:|y-c|=R, \varangle (y-c, 0-c)=2\alpha\},
\end{align} where $c=(0,...,0,R)\in \mathbb{R}^{n+1}$ is the center of the sphere of support to $u$ at $(0,u(0))$.
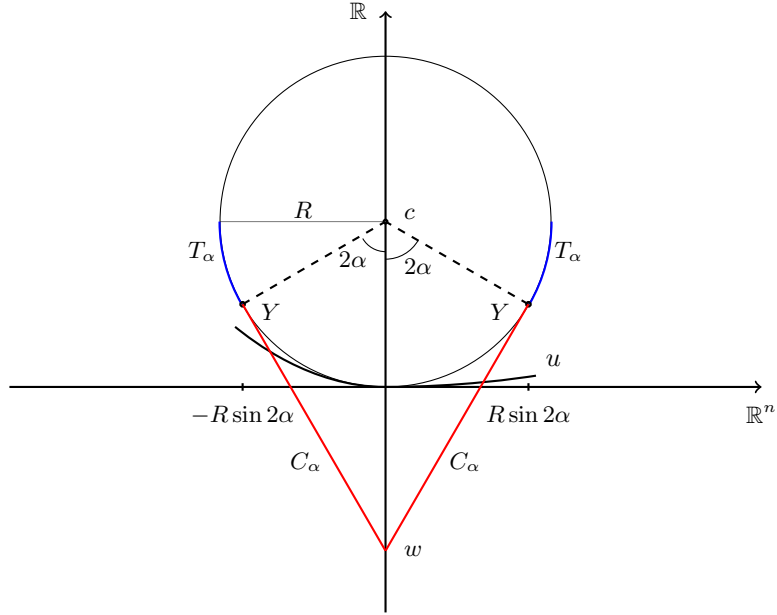
\begin{figure}[!h]
\centering
\begin{tikzpicture}

\draw[thick, ->] (-5,0) -- (5,0); 
\draw[thick, ->] (0,-3) -- (0,5);

\draw[thick, dashed] (1.9,1.1) -- (0,2.2); 
\draw[thick, dashed] (-1.9,1.1) -- (0,2.2);

\draw[thick] (0,0) parabola (2,.15) node[anchor=south west] {$u$}; 
\draw[thick] (0,0) parabola (-2,.8) ;
\draw[thin,gray] (0,2.2)--(-2.2,2.2);

\draw (0,2.2) circle (2.2cm) ; 

\draw[thick] (0,2.2) circle (0.025cm) ; 
\node at (0,2.3) [label=right:\small{$c$}]{} ;

\coordinate (center) at (0,2.2); 

\cercle{center}{2.2cm}{0}{-30};
\cercle{center}{2.2cm}{180}{30};

\node at (0,1.7) [label=left:\small{$2\alpha$}]{} ; 
\node at (0,1.6) [label=right:\small{$2\alpha$}]{} ;

\node at (-1.9,1) [label=right:\small{$Y$}]{} ; 
\node at (1.9,1) [label=left:\small{$Y$}]{} ;
\draw[thick] (-1.9,1.1) circle (0.03cm) ;
\draw[thick] (1.9,1.1) circle (0.03cm) ;

\draw (0,1.7) arc (270:330:.5cm);
\draw (0,1.8) arc (270:210:.35cm);

\draw[thick, red] (0,-2.18) -- (1.9,1.1); 
\draw[thick, red] (0,-2.18) -- (-1.9,1.1);

\node at (0,-2.18) [label=right:\small{$w$}]{} ;

\node at (2,1.8) [label=right:\small{$T_{\alpha}$}]{} ; 
\node at (-2,1.8) [label=left:\small{$T_{\alpha}$}]{} ;

\node at (.6,-1) [label=right:\small{$C_{\alpha}$}]{} ; 
\node at (-.6,-1) [label=left:\small{$C_{\alpha}$}]{} ;

\node at (0,5) [label=left:\small{$\mathbb{R}$}]{} ;
\node at (5,0) [label=below:\small{$\mathbb{R}^n$}]{} ;

\node at (-1.1,2) [label=above:\small{$R$}]{} ;

\draw[thick] (1.9,-.05) -- (1.9,.05);
\draw[thick] (-1.9,-.05) -- (-1.9,.05);

\node at (1.9,0) [label=below:\small{$R \sin 2\alpha$}]{} ;
\node at (-1.9,0) [label=below:\small{$-R \sin 2\alpha$}]{} ;
\end{tikzpicture}

\caption{Construction of Auxiliary Convex Functions}
\end{figure}
$Y$ forms a \textquotedblleft ring \textquotedblright on $S(c,R)$, and clearly the projection of $Y$, $P(Y)$, onto $\mathbb{R}^{n}$ is the $n-1$ sphere of radius $R\sin 2\alpha$, centered at 0. Next, let $C_{\alpha}$ denote the union of all closed segments $\overline{\text{wy}}$ with one endpoint $w$ on the axis $0\times \mathbb{R}\subset \mathbb{R}^{n+1}$ and tangent to the sphere $S(c,R)$ at the other endpoint y, where $y\in Y$. Note that w is independent of which $y\in Y$ that is being used. $C_{\alpha}$ is simply a finite cone with vertex $w$ and base $Y$, tangent to $S(c,R)$ along $Y$. See Figure 2.\\
Define now 
\begin{align}
T_{\alpha}= \{y\in S(c,R): R(1-\cos 2\alpha)\leq y_{n+1}< R\}.
\end{align}
$T_{\alpha}$ can be visualized as a \textquotedblleft strip\textquotedblright \hspace{.25mm} of $S(c,r)$, and note that $T_{\alpha}\cap C_{\alpha}=Y$ and that $T_{\alpha}\cup C_{\alpha}$ defines a convex function $k_{\alpha}:B(0,R)\rightarrow R$.\\
For $0<\alpha<\frac{1}{2}\arcsin (\frac{d}{R})$, define
\begin{align*}
v_{\alpha}=
\begin{cases}
\text{max}(u(x),k_{\alpha}(x)),\hspace{5mm} |x|<R\sin 2\alpha\\
k_{\alpha}(x), \hspace{23mm} R\sin 2\alpha \leq |x|< R.
\end{cases}
\end{align*}
Note that $u$ is only defined on $B(0,d)$ and $R\sin 2\alpha <d<R$, so that is why $v_{\alpha}$ is defined this way. 
It is clear that
\begin{align}
v_{\alpha}(x)\geq u(x), \text{ for } |x|<d.
\end{align}

Observe that $v_{\alpha}$ is locally convex on the set $|x|\neq R\sin 2\alpha$ since for $|x|> R\sin 2\alpha,  v_{\alpha}=k_{\alpha}(x)$, which is convex, and for $|x|< R\sin 2\alpha, v_{\alpha}$ is the maximum of two convex functions which is convex. If $|x|=R\sin 2\alpha$, then $(x,v_{\alpha}(x))\in Y\subset S(c,r)$ Since $S(c,r)$ lies above the graph of $u$, so $k_{\alpha}|_{Y} > u|_{Y}$. Thus near $Y, v_{\alpha}\equiv k_{\alpha}$, and so $v_{\alpha}$ is locally convex in $B(0,R)$, which implies that $v_{\alpha}$ is convex.\\
\textbf{\textit{Step Two.}} For any convex function the following Lipschitz map can be constructed. This will let us relate the possibly complex set, $X$, to the disk $B(0,\delta(\epsilon))$. Given a convex function $v:B(0,R)\rightarrow \mathbb{R}$. Let $E(v)=\{(x,t)\in \mathbb{R}^{n+1}: t>v(x) \}$ denote the strict epigraph of $v$, and define $Z^{v}$ as the set of all $y=(x,v(x))$, where $|x|< R$, and such that for some $c'\in \mathbb{R}^{n+1}$, $B(c',r)\subset E(v)$ and $y\in S(c',r)$, where $r<R$, as defined earlier.

Note that if $y=(x,v(x))\in Z^{v}$, then the graph($v$) has a unique supporting hyperplane at $y$ (since any such hyperplane is tangent to $S(c',r)$), and thus $c'$ is uniquely determined by $y$.

Now consider the map $\gamma^{v}:Z^{v}\rightarrow \mathbb{R}^{n+1}$, where $\gamma^{v}(y)=c'$. This map is Lipschitz with constant one. To see this, let $y_{1},y_{2}\in Z^{v}$ and $c'_{i}=\gamma^{v}(y_{i}), i=1,2.$ The set $E(v)$ is convex (by definition since $v$ is convex), and so it contains $W := \text{co}(B(c_{1},r)\cup B(c_{1},r))$, where co() denotes the convex hull. In particular, $W\cap$ graph($v$)=$\emptyset$. Since $y_{i}\in S(c_{i},r)\cap \text{graph}(v)$, $y_{i}\in S(c'_{i},r)\setminus W, i=1,2$. Thus, $y_{1}$ and $y_{2}$ do not belong to, and are separated by, the open region between two hyperplanes which are orthogonal to the segment $\overline{c'_{1}c'_{2}}$ and pass through its ends. Therefore $|c'_{1}-c'_{2}|\leq |y_{1}-y_{2}|$. The importance of this map will be seen below, where combined with $u$ and the projection map $P$ it allows the set of interest in $\mathbb{R}^n$ to be related to a small disk.\\
\textbf{\textit{Step Three.}} Three key set inclusions are established. Along with step two this will allow on small neighborhoods the measure of $X$ to be bounded from below by the volume of small $n-$ balls. Using the notation above, let $Z^{\alpha}$ and $\gamma^{\alpha}$ denote the set $Z^{v}$ and map $\gamma^{v}$, respectively, for $v=v_{\alpha}$, where $0<\alpha<\frac{1}{2}\arcsin (\frac{d}{R})$.\\
Consider the set 
\begin{align}
U_{\alpha}=\text{graph}(v_{\alpha})\setminus (C_{\alpha}\cup T_{\alpha}).
\end{align}
Note that this is a subset of the graph of $u$.
For $\alpha \in (0,\frac{1}{2}\arcsin(\frac{d}{R}))$, we have the following three inclusions:
\begin{align}
&P(U_{\alpha})\subset B(0,R\sin 2\alpha)\\
&Z^{\alpha}\cap U_{\alpha} \subset Z \cap U_{\alpha}\\
&B_{N}(0,\delta)\subset P\gamma^{\alpha}(Z^{\alpha}\cap U_{\alpha}), \text{where }\delta=(R-r)\tan \alpha.
\end{align}

The first inclusion follows directly from the definition of $U_{\alpha}$: $|x| \geq R \sin 2\alpha \Rightarrow v_{\alpha}(x)\in T_{\alpha}$.

By (5), $Z^{\alpha}\cap \text{graph}(u)\subset Z$. To see this, let $z\in Z^{\alpha}$. Thus we have a $c'\in \mathbb{R}^{n+1}$ such that $B(c',r)\subset E(v_{\alpha})$ and $z\in S(c',r)$. So there is a sphere of radius $r$ supporting the graph of $v_{\alpha}$ from above at $z$. If $z\in$ graph($u$), then we must have $z \in Z$:  $B(c',r)\subset E(v_{\alpha})$ and $v_{\alpha}(x)\geq u(x)$ give us that $B(c',r)\cap\text{graph}(u)=\emptyset$ and $c'_{n+1}> u(Pc'),$ which together with $z\in S(c',r)$ imply that $z \in Z$, by definition. Since $U_{\alpha}\subset \text{graph}(u), Z^{\alpha}\cap U_{\alpha}\subset Z^{\alpha} \cap \text{graph}(u)\subset Z$. And of course $Z^{\alpha}\cap U_{\alpha}\subset U_{\alpha}$, so together we have $Z^{\alpha}\cap U_{\alpha} \subset Z\cap U_{\alpha},$ which gives us the second inclusion.

The third inclusion is the critical aforementioned relation between the set of points with spheres of support and a disk in $\mathbb{R}^n$. (Below we will take $\epsilon=R\sin\alpha$ and $\delta=(R-r)\tan\alpha$). To obtain this inclusion we proceed as follows. Let $x\in \mathbb{R}^n$, be such that $|x|<R-r$, and consider the set \begin{align}
\{c'\in \{x\} \times \mathbb{R}: B(c',r)\subset E(v_{\alpha}) \}.
\end{align}
This set is a non-empty, closed half-line. To see this, consider lowering the sphere $S((x,c'_{n+1}),r)$ in $\mathbb{R}^{n+1}$ onto the graph of $v_{\alpha}$, by continuously decreasing the last coordinate. Because the radius of this sphere is r and $|x|<R-r$, this sphere comes down on a closed subset of the graph of $v_{\alpha}$. Once contact is made with the graph of $v_{\alpha}$ we stop, and the corresponding value of $(x,c'_{n+1})$ is our closed endpoint. Let $c'\in\mathbb{R}^{n+1}$ be this endpoint and $y\in S(c',r)\cap \text{graph}(v_{\alpha})$ (note that $y$ may not be unique). Then $c'=\gamma^{\alpha}(y)$ and $x=P\gamma^{\alpha}(y)$, and so
\begin{align}
B_{N}(0,R-r)\subset P\gamma^{\alpha}(Z^{\alpha}).
\end{align}
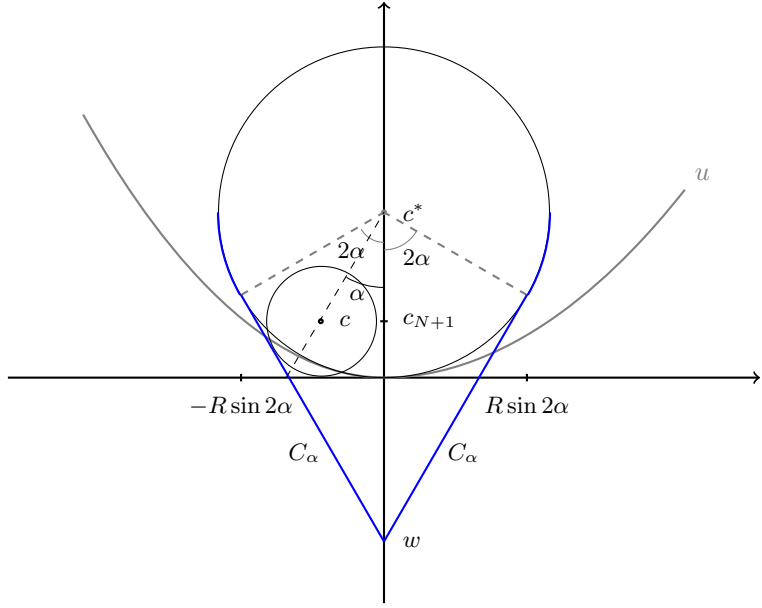
\begin{figure}[H]
\centering
\begin{tikzpicture}

\draw[thick, ->] (-5,0) -- (5,0); 
\draw[thick, ->] (0,-3) -- (0,5);

\draw[thick, dashed, gray] (1.9,1.1) -- (0,2.2); 
\draw[thick, dashed, gray] (-1.9,1.1) -- (0,2.2);

\draw[thick,gray] (0,0) parabola (4,2.5) node[anchor=south west] {$u$}; 
\draw[thick,gray] (0,0) parabola (-4,3.5) ;

\draw (0,2.2) circle (2.2cm) ; 

\draw[thick,gray] (0,2.2) circle (0.025cm) ; 
\node at (0,2.2) [label=right:\small{$c^{*}$}]{} ;

\coordinate (center) at (0,2.2); 

\cercle{center}{2.2cm}{0}{-30};
\cercle{center}{2.2cm}{180}{30};

\node at  (0,1.7) [label=left:\small{$2\alpha$}]{} ; 
\node at  (0,1.6) [label=right:\small{$2\alpha$}]{} ;

\draw [gray] (0,1.7) arc (270:330:.5cm);
\draw [gray] (0,1.8) arc (270:210:.35cm);

\draw[thick, blue] (0,-2.18) -- (1.9,1.1); 
\draw[thick, blue] (0,-2.18) -- (-1.9,1.1);

\node at (0,-2.18) [label=right:\small{$w$}]{} ;

\node at (.6,-1) [label=right:\small{$C_{\alpha}$}]{} ; 
\node at (-.6,-1) [label=left:\small{$C_{\alpha}$}]{} ;

\draw[thick] (1.9,-.05) -- (1.9,.05);
\draw[thick] (-1.9,-.05) -- (-1.9,.05);

\node at (1.9,0) [label=below:\small{$R \sin 2\alpha$}]{} ;
\node at (-1.9,0) [label=below:\small{$-R \sin 2\alpha$}]{} ;

\draw[ dashed] (-1.3,0) -- (0,2.2);
\draw (-.83,.75) circle (.73cm) ;
\draw[thick] (-.84,.75) circle (.025cm) ;
\node at (-.84,.75) [label=right:\small{$c$}]{} ;

\draw (0,1.2) arc (270:240:1cm);

\draw[thick] (-.05,.75) -- (.05,.75);

\node at (0,.75) [label=right:\small{$c_{N+1}$}]{} ;

\node at (0,1.1) [label=left:\small{$\alpha$}]{} ; 
\end{tikzpicture}

\caption{Closest Supporting Spheres to Origin}
\end{figure}$\vspace{3mm}$\\

Now $Z^{\alpha}\setminus (C_{\alpha}\cup T_{\alpha}) \subset$ graph$(v_{\alpha})\setminus(C_{\alpha}\cup T_{\alpha})=U_{\alpha}$, so clearly $Z^{\alpha}\setminus (C_{\alpha}\cup T_{\alpha})\subset Z^{\alpha}\cap U_{\alpha}$. Therefore,
\begin{align}
P\gamma^{\alpha}(Z^{\alpha})\setminus P\gamma^{\alpha}(Z^{\alpha}\cap(C_{\alpha}\cup T_{\alpha}))\subset P\gamma^{\alpha}(Z^{\alpha}\cap U_{\alpha}).
\end{align}
This relation and (12) will give us our third inclusion (9), once we show that 
\begin{align}
 P\gamma^{\alpha}(Z^{\alpha}\cap(C_{\alpha}\cup T_{\alpha}))\cap B_{N}(0,\delta)=\emptyset.
\end{align}

Consider the family of all spheres $S(c',r)$ which support $C_{\alpha}\setminus Y$ from above and are contained in the upper half space $y_{n+1}\geq 0$. Clearly the smallest value of $|P(c')|$ is attained when the sphere $S(c',r)$ is tangent to both $C_{\alpha}$ and $\{y_{n+1}=0\}$ (see Fig. 3). It is not difficult to see that in this case $\varangle (c'-c,0-c)=\alpha$, where $c$ here is the center of the initial sphere of support. 
This gives us $|P(c')|=(|c|-c'_{N+1})\tan\alpha=(R-r)\tan\alpha=\delta$, which implies 
\begin{align}
 P\gamma^{\alpha}(Z^{\alpha}\cap C_{\alpha})\cap B_{N}(0,\delta)=\emptyset.
\end{align}
Now when $S(c',r)$ supports $T_{\alpha}\setminus Y$ from the above at some point $y$, the segment $\overline{c',y}$ is normal to $S(c,R)$ and $y_{N+1}\geq R(1-\cos 2\alpha)\geq \delta$. Thus $\varangle (c'-c,0-c)\geq 2\alpha$ and, as above, $|P(c)|\geq (R-r)\tan 2\alpha \geq \delta$ (note $0\leq \alpha\leq\frac{\pi}{4}$). This gives 
\begin{align}
 P\gamma^{\alpha}(Z^{\alpha}\cap T_{\alpha})\cap B_{N}(0,\delta)=\emptyset.
\end{align}
Combining (15) and (16) we have (13), which gives the third inclusion.\\
\textbf{\textit{Step Four.}} Estimate of the density of $X$. The above inclusions and the effect of Lipschitz maps on measure, will be enough to estimate the density of $X=P(Z)$. Recall that $Z=\{(x,u(x)) \in \mathbb{R}^{N+1} | x \in X \}$, where $X$ is the set of points in $B(0,d)\subset \mathbb{R}^{N}$ such that there exists a sphere of radius $r$ supporting  the graph of $u$ from above at $(x,u(x))$.

Using a few theorems from Rockafellar [7], it can be shown that the map $\varphi : P(U_{\alpha})\rightarrow U_{\alpha}$, where $\varphi(x)=(x,u(x))$ is Lipschitz with constant $(1+g_{\alpha}^{2})^{\frac{1}{2}}$, where $g_{\alpha}=\text{sup} \{ |\nabla u| : |x|< R \sin 2\alpha \}$. More specifically, by Theorem 10.4, $u$ is Lipschitz, and by Theorems 24.7, 25.5, and 25.6 $g_{\alpha}$ is a Lipschitz bound for $u|_{B(0,R\sin 2\alpha)}$). A simple Pythagorean argument then shows $(1+g_{\alpha}^{2})^{\frac{1}{2}}$ is a Lipschitz bound for  $\varphi$. Notice that $\varphi$ maps $X\cap P(U_{\alpha})= P(Z\cap U_{\alpha})$ onto $Z\cap U_{\alpha}$.

A basic theorem regarding the effect of Lipschitz maps on Hausdorff measures (Theorem 2.29 in Rogers [8]), along with our first inclusion from above (7), leads to:
\begin{align*}
H^{n}(Z\cap U_{\alpha})\leq & (1+g_{\alpha}^{2})^{\frac{n}{2}}m_{n}(X\cap P(U_{\alpha}))\\
\leq & (1+g_{\alpha}^{2})^{\frac{n}{2}}m_{n}(X\cap B(0,\varepsilon)),\hspace{3mm} \varepsilon = R\sin 2\alpha,
\end{align*} where again $H^n$ and $m_n$ denote the Hausdorff and Lebesgue measure on $\mathbb{R}^n$, respectively.
Furthermore
\begin{align*}
m_{n}(B(0,\delta)) \leq & m_{n}(P\gamma^{\alpha}(Z^{\alpha}\cap U_{\alpha})) \hspace{6mm} \text{by } (9)\\
\leq & H^{n}(Z^{\alpha}\cap U_{\alpha}) \hspace{16mm} P\gamma^{\alpha} \text{ is Lipschitz with constant }\leq 1\\
\leq & H^{n}(Z\cap U_{\alpha}) \hspace{18mm} \text{by } (8).
\end{align*}
Finally, combining these inequalities one obtains
\begin{align*}
\dfrac{m_{n}(X\cap B(0,\varepsilon))}{m_{n}(B(0,\varepsilon))}&\geq(1+g_{\alpha}^{2})^{\frac{-n}{2}}\dfrac{m_{n}( B(0,\delta))}{m_{n}(B(0,\varepsilon))}\\
&=(1+g_{\alpha}^{2})^{\frac{-n}{2}} \left(\dfrac{(R-r)\tan \alpha}{R \sin 2\alpha}\right)^{n}\\
&=(1+g_{\alpha}^{2})^{\frac{-n}{2}} \left(\dfrac{R-r}{2R }\right)^{n} \cos^{-2n}\alpha,
\end{align*}
where the volume of an $n$-ball of radius $r$ is $\dfrac{\pi^{\frac{n}{2}}r^{n}}{\Gamma (\frac{n}{2}+1)}$ in the first equality, and $\Gamma$ denotes the gamma function. Thus,
\begin{align*}
\liminf_{\varepsilon\rightarrow 0}\dfrac{m_{n}(X\cap B(0,\varepsilon))}{m_{n}(B(0,\varepsilon))} \geq \liminf_{\varepsilon\rightarrow 0} (1+g_{\alpha}^{2})^{\frac{-n}{2}} \left(\dfrac{R-r}{2R }\right)^{n} \cos^{-2n}\alpha.
\end{align*}
Now since $\varepsilon = R\sin 2\alpha$ and $0<\alpha<\frac{\pi}{4}$, as $\varepsilon\rightarrow 0$, $\alpha\rightarrow 0$. And as the gradient of a convex function is continuous (Theorem 25.5, [1]), $g_{\alpha}\rightarrow 0$ as well since $\nabla u(0)=0$. Therefore the lower density of $X$ at $0$ is not less than $\left(\dfrac{R-r}{2R }\right)^{N}$. 
\end{proof}

\subsection{2.4 Proof of Theorem 1.3}
Lemma 1.7 and Proposition 1.6 now combine nicely to give us Theorem 1.3.
\begin{proof}[Proof of Theorem 1.3]
First, we prove the density result. Without loss of generality, let $x_0=0, u(x_0)=0, \nabla u(x_0)=0$. Note that by the convexity of $u$ this implies $u\geq 0$. Set $k_0=K(u,x_0)=K(u,0)$, and let $k>k_0$ be fixed and take $K$ such that $k>K>k_0$.

Set $R=\dfrac{1}{K}$ and note that $R -(R^{2}-|x|)^{\frac{1}{2}}\geq \dfrac{1}{2R} |x|^{2}=\dfrac{K}{2} |x|^{2}, \hspace{2mm} \forall x$ such that $|x|<R$. This follows immediately by contradiction. The left-hand side of this inequality is the  last component of the point $(x,t)\in \mathbb{R}^{n}$, where $x\in \mathbb{R}^{n}$, on the $(n+1)$-dimensional sphere of radius $R$ centered $(0,...,0,R) \in \mathbb{R}^{n+1}$ (i.e the value of $d(x)$, where $d$ is the lower hemisphere function defined in the proof of the proposition, see Fig. 1).
\begin{figure}[!h]
\centering
\begin{tikzpicture}

\draw[thick, ->] (-2.5,0) -- (2.5,0); 
\draw[thick, ->] (0,-1) -- (0,4.5);

\draw (0,0) parabola (2,.75);

\draw (0,2) circle (2cm) ; 

\node at (0,2) [label=right:\small{$R$}]{} ;
\node at (2,.75) [label=right:\small{$\dfrac{1}{2R} |x|^{2}$}]{} ;

\draw[thick] (1,.1) -- (1,-.1);
\draw[thick] (-.1,2) -- (.1,2);

\node at (1,0) [label=below:\small{$x$}]{} ;
\node at (.2,1.2) [label=left:\tiny{$\left( R^{2} - |x|^{2} \right)^{\frac{1}{2}}$}]{} ;

\node at (2.5,0) [label=below:\small{$\mathbb{R}^n$}]{} ;
\node at (0,4.5) [label=left:\small{$\mathbb{R}$}]{} ;

\draw[dashed] (0,2) -- (1,.28);
\draw[dashed] (0,.28) -- (1,.28);

\end{tikzpicture}
\caption{}

\end{figure}

Since $K>K(u,0)$, by Proposition 1.6 (ii) there exists $d>0$ such that 
\begin{align*}
u(0+h)-u(0)- \langle\nabla u(0),h\rangle \hspace{2mm} &\leq \frac{1}{2}K|h|^{2} \text{ for every } |h|<d.
\end{align*} So
\begin{align*}
u(h)& \leq \frac{1}{2}K|h|^{2} \text{ for every } |h|<d.
\end{align*}
Thus the sphere $S(c,R)$, where $c=(0,...0,R)\in \mathbb{R}^{n+1}$, supports the graph of $u|_{B(0,d)}$ form above at $0\in \mathbb{R}^{n+1}$, and Lemma 1.7 can be applied to the function $u|_{B(0,d)}$.

Let $r$, such that $\frac{1}{k}<r<R$, be arbitrary, and let $X=X_{r}$ and $Z=Z_{r}$ be defined as in Lemma 1.7. By Proposition 1.6 (iii), $\forall x \in X$
\begin{align*}
K(u,x)\leq \dfrac{(1+g^{2})^{\frac{3}{2}}}{r}, \hspace{5mm} \text{ where }g=|\nabla u(x)|.
\end{align*}
Set 
\begin{align*}
g_{\varepsilon}=\sup\{|\nabla u(x)| : |x|<\varepsilon\}.
\end{align*} Then clearly $$K(u,x) \leq \dfrac{(1+ g_{\varepsilon}^{2})^{\frac{3}{2}}}{r} \hspace{5mm} \forall x\in X\cap B(0,\varepsilon).$$

By the continuity of the gradient function, $\lim_{\varepsilon\rightarrow 0} g_{\varepsilon}=|\nabla u(0)|=0$. Thus since $\dfrac{1}{r} < k$, there exists $\varepsilon'$, where $0<\varepsilon'< d$, such that 
\begin{align*}
\dfrac{(1+ g_{\varepsilon}^{2})^{\frac{3}{2}}}{r}<k, \hspace{5mm} \text{ for } 0<\varepsilon<\varepsilon', 
\end{align*}
and so
\begin{align*}
(B(0,\varepsilon)\cap X)\subset (B(0,\varepsilon)\cap X'_{k}), \hspace{5mm} \text{for } 0<\varepsilon<\varepsilon'.
\end{align*}
If $x\in X$ then there exists a supporting sphere of radius $r$ at $(x,u(x))$, and if $x \in B(0,\varepsilon)$, where $\varepsilon <\varepsilon'$, then $K(u,x)<k$.

It follows by Lemma 1.7 that
\begin{align*}
\liminf_{\varepsilon\rightarrow 0}\dfrac{m_{n}(X'_{k}\cap B(0,\varepsilon))}{m_{n}(B(0,\varepsilon))} \geq & \liminf_{\varepsilon\rightarrow 0}\dfrac{m_{n}(X\cap B(0,\varepsilon))}{m_{n}(B(0,\varepsilon))}\\
\geq & \left( \dfrac{R-r}{2R} \right)^n.
\end{align*}
Now recall that $R=\dfrac{1}{K}$ was chosen arbitrarily so that it satisfied the inequality $\dfrac{1}{k} <\dfrac{1}{K}<\dfrac{1}{k_0}$, where $k$ and $k_0$ are fixed. Similarly, $r$ was chosen arbitrarily so that $\dfrac{1}{k} < r <\dfrac{1}{K}$. Thus we can choose $R=\dfrac{1}{K}$ and $r$ arbitrarily close to $\dfrac{1}{k_0}$ and $\dfrac{1}{k}$, respectively, giving us the desired bound $\left(\dfrac{k-k_0}{2k}\right)^{n}.$ \\

Finally, the fact that $X'_{k}:=\{ x\in \text{dom } (u) : K(u,x)<k  \}$ is Borel is contained in Proposition 2.2 and Lemma 2.3 below. Let $u: \mathbb{R}^n\rightarrow \mathbb{R}$ convex. Proposition 2.2 shows that the set $W$ on which $u$ is differentiable is Borel, specifically a $F_{\sigma \delta}$, and Lemma 2.3 proves that $K(x):=K(u,x)$ is of second Baire class on this set. Since $K(x)= +\infty$ where $\nabla u$ doesn't exists, $$X'_{k}=\{ x\in \text{dom } (u) : K(x)<k  \} =\{ x\in W : K|_W(x)<k  \}.$$ It follows immediately that $X'_k$ is Borel, as $K|_W : W \rightarrow \mathbb{R}$ is a Borel measurable function. Recall that Baire class 1 functions are the pointwise limit of continuous functions and thus Borel measurable, and Baire class 2 functions are the pointwise limit of Baire class 1 functions and thus also Borel measurable.
\end{proof}
\begin{proposition}{2.2}
Let $u:\mathbb{R}^n\rightarrow \mathbb{R}$ be convex. Then the set on which $u$ is differentiable is a dense Borel set, specifically an $F_{\sigma \delta}$.
\end{proposition}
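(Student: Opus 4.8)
The plan is to exhibit the differentiability set $W$ of a convex function $u\colon\mathbb R^n\to\mathbb R$ explicitly as a countable intersection of countable unions of closed sets. The natural quantity to control is the oscillation of the (set-valued) subdifferential, or equivalently the gap between one-sided directional derivatives. For each $x$ and each unit vector $e$, convexity guarantees that the one-sided derivative $u'(x;e)=\lim_{t\to0^+}\bigl(u(x+te)-u(x)\bigr)/t$ exists; $u$ is differentiable at $x$ precisely when $u'(x;e)=-u'(x;-e)$ for all $e\in S^{n-1}$, and by continuity of $u'(x;\cdot)$ it suffices to check this for $e$ ranging over a fixed countable dense subset of $S^{n-1}$. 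So the first step is to record these standard facts from convex analysis (e.g.\ Rockafellar, Theorems~23.1, 23.4, 25.2) and reduce differentiability at $x$ to the countable family of conditions $u'(x;e)+u'(x;-e)=0$.

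The second step is to write, for a fixed unit vector $e$ and integers $j,m\ge 1$,
$$
A_{e,m,j}=\Bigl\{x : \tfrac{u(x+te)-u(x)}{t}+\tfrac{u(x-te)-u(x)}{t}\le \tfrac1m \text{ for all } t\in(0,\tfrac1j]\Bigr\},
$$
which is closed because $u$ is continuous and the defining inequality is a supremum of continuous functions of $x$ (one inequality for each $t$, and the sup of continuous functions over a compact parameter set is upper semicontinuous — here actually it is cleaner to take the pointwise condition at each rational $t$, giving a closed set directly as a countable intersection of closed sets). By monotonicity of difference quotients for convex functions, the quantity $\bigl(u(x+te)-u(x)\bigr)/t+\bigl(u(x-te)-u(x)\bigr)/t$ decreases to $u'(x;e)+u'(x;-e)\ge 0$ as $t\to0^+$; hence $x$ satisfies $u'(x;e)+u'(x;-e)\le 1/m$ iff $x\in A_{e,m,j}$ for some $j$. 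Therefore the set where $u'(x;e)+u'(x;-e)=0$ equals $\bigcap_{m}\bigcup_{j}A_{e,m,j}$, an $F_{\sigma\delta}$ set, and intersecting over $e$ in a countable dense set $D\subset S^{n-1}$ gives $W=\bigcap_{e\in D}\bigcap_{m}\bigcup_{j}A_{e,m,j}$, still an $F_{\sigma\delta}$. Density of $W$ is the classical Alexandrov/Rademacher-type statement (differentiability a.e.\ implies density), cited from the introduction.

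The main obstacle, and the only place requiring care, is the reduction from "differentiable at $x$" to the countable condition indexed by $e\in D$: one must verify that vanishing of $u'(x;e)+u'(x;-e)$ for a dense set of directions forces it for all directions, and then that coincidence of all one-sided derivatives with a single linear functional is genuinely equivalent to Fréchet (not merely Gâteaux) differentiability for convex functions. Both are standard — the first follows from the sublinearity and continuity of $e\mapsto u'(x;e)$ together with the sandwich $-u'(x;-e)\le \langle p,e\rangle\le u'(x;e)$ for any subgradient $p$, and the second from the fact that a convex function whose subdifferential at $x$ is a singleton is differentiable there — so I would state them as lemmas with references to Rockafellar rather than reprove them. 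Once these are in place, the $F_{\sigma\delta}$ description above is immediate and the proposition follows.
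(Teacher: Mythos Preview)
Your proposal is correct and follows essentially the same route as the paper: reduce differentiability to the vanishing of $u'(x;e)+u'(x;-e)$ in finitely or countably many directions, then use monotonicity of the symmetric difference quotient to write this as an $F_{\sigma\delta}$ (equivalently, the complement as a $G_{\delta\sigma}$ of open sets $E_{n,k}$ defined by strict inequalities). The one economy the paper gains over your version is that it uses only the $n$ coordinate directions $e_1,\dots,e_n$, invoking the standard convex-analysis fact that existence of all partial derivatives already forces (Fr\'echet) differentiability; this makes your dense-subset reduction and the G\^ateaux-versus-Fr\'echet discussion unnecessary.
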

\begin{proof}
Since $u$ is convex, $u$ is differentiable at $x$ if and only if all the partial derivatives of $u$ exist at $x$, with respect to any basis [6, IV.4.2]. Let $\{e_i\}_{i=1}^{n}$ be the standard basis in $\mathbb{R}^n$, and define 
$$f'(x,e_i):= \lim_{t\downarrow 0} \dfrac{f(x+te_i)- f(x)}{t}.$$
Then $\frac{\partial f}{\partial x_i}(x)$ exists if and only if $f'(x,e_i)=-f'(x,-e_i)$[6, IV.4.2]. Note that the above limit always exists for a convex function and $f'(x,e_i)\geq -f'(x,-e_i)$ for all $x$. Take $E$ to be the set where $u$ is not differentiable and $E_i$ to be the set of points where $\frac{\partial f}{\partial x_i}(x)$ does not exist. Then $E=\cup_{i=1}^n E_i$, and 
$$E_i = \{ f'(x,e_i)+f'(x,-e_i)>0  \}.$$
If $x\in E_i$, then there exists $N$ such that for all $n\geq N$, 
$$\dfrac{f(x+\frac{e_i}{k})- f(x)}{\frac{1}{k}} + \dfrac{f(x-\frac{e_i}{k})- f(x)}{\frac{1}{k}} > \dfrac{1}{n},$$
for all $k\geq n$.
Let 
$$ E_{n,k}=\left\{ x: \frac{f(x+\frac{e_i}{k})- f(x)}{\frac{1}{k}} + \frac{f(x-\frac{e_i}{k})- f(x)}{\frac{1}{k}} > \frac{1}{n}   \right\},$$ and note that $E_{n,k}$ is open since $f$ is continuous (a real-valued convex function).
Thus, $$E_i = \cup_{n=1}^\infty \cap _{k=n}^\infty E_{n,k},$$ which is clearly a $G_{\delta \sigma}$ and so $E$ is also a $G_{\delta \sigma}$, being a union of finitely many. Therefore, the set $\mathbb{R}^n \setminus E$ on which $u$ is differentiable  is an $F_{\sigma \delta}.$ That $\mathbb{R}^n \setminus E$ is dense is well-known.
\end{proof}

\begin{lemma}{2.3}
Let $u:\mathbb{R}^n\rightarrow \mathbb{R}$ be convex and $W \subset \mathbb{R}^n$ the set on which $u$ is differentiable. Then the function $K(x):=K(u,x)$ is of second Baire class on $W$.
\end{lemma}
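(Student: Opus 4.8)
The plan is to exhibit $K|_W$ as a pointwise limit of pointwise limits of continuous functions on $W$. For $\epsilon>0$ and $x\in W$ put
$$g_\epsilon(x)=2\epsilon^{-2}\max\bigl\{u(x+\epsilon h)-u(x)-\epsilon\langle\nabla u(x),h\rangle:h\in S^{n-1}\bigr\},$$
so that, since negative values of $\epsilon$ contribute nothing (replace $h$ by $-h$), $K(u,x)=\limsup_{\epsilon\to 0^+}g_\epsilon(x)$ for $x\in W$. Because $\nabla u(x)$ is a subgradient of $u$ at $x$, every term in the maximum is nonnegative, so $g_\epsilon\geq 0$ and $K\geq 0$; I regard $K$ as taking values in $[0,+\infty]$ (equivalently, precompose with a fixed homeomorphism $[0,+\infty]\to[0,1]$), on which the Baire classification is understood in the usual way. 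This is harmless, since Theorem 1.3 only uses that $\{x\in W:K|_W(x)<k\}$ is Borel.

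First I would show that each $g_\epsilon$ is continuous on $W$. A real-valued convex function is continuous on $\mathbb{R}^n$, and the gradient map $x\mapsto\nabla u(x)$ is continuous on the differentiability set $W$ (Rockafellar; this continuity was already invoked in the proof of Lemma 1.7). Hence $F(x,h):=u(x+\epsilon h)-u(x)-\epsilon\langle\nabla u(x),h\rangle$ is jointly continuous on $W\times S^{n-1}$, and since $S^{n-1}$ is compact the value function $x\mapsto\max_{h\in S^{n-1}}F(x,h)$ is continuous on $W$ (Berge's maximum theorem; directly, if $x_j\to x_0$ in $W$, compactness of $S^{n-1}$ and joint continuity of $F$ give both $\limsup_j\max_h F(x_j,h)\leq\max_h F(x_0,h)$ via a subsequence of maximizers, and $\liminf_j\max_h F(x_j,h)\geq\max_h F(x_0,h)$ by testing against a maximizer for $x_0$). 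Multiplying by the constant $2\epsilon^{-2}$ gives continuity of $g_\epsilon$ on $W$.

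Next I would turn the $\limsup$ into two countable operations. For fixed $x\in W$ the map $\epsilon\mapsto g_\epsilon(x)$ is continuous on $(0,\infty)$, so for every $\delta>0$
$$h_\delta(x):=\sup_{0<\epsilon<\delta}g_\epsilon(x)=\sup_{\epsilon\in\mathbb{Q}\cap(0,\delta)}g_\epsilon(x)=\lim_{N\to\infty}\max\{g_{\epsilon_1}(x),\dots,g_{\epsilon_N}(x)\},$$
where $\epsilon_1,\epsilon_2,\dots$ enumerates $\mathbb{Q}\cap(0,\delta)$. A finite maximum of continuous functions is continuous, so $h_\delta$ is an increasing pointwise limit of continuous functions on $W$, hence of first Baire class. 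Finally $h_\delta(x)$ is nonincreasing in $\delta$ with $\lim_{\delta\downarrow 0}h_\delta(x)=\limsup_{\epsilon\to 0^+}g_\epsilon(x)=K(u,x)$, so
$$K(u,x)=\lim_{m\to\infty}h_{1/m}(x)\qquad(x\in W)$$
is a pointwise limit of first-Baire-class functions, i.e. of second Baire class on $W$.

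The only step beyond bookkeeping is the continuity of $g_\epsilon$ on $W$, which rests on two inputs: the continuity of the gradient of a convex function relative to its differentiability set, and the (elementary but essential) continuity of the value function of a jointly continuous $F$ maximized over the compact sphere. Everything after that is the standard device of realizing $\limsup_{\epsilon\to 0^+}$ as $\inf_m\sup_{\mathbb{Q}\cap(0,1/m)}$, which replaces an a priori uncountable supremum by a countable one and keeps us inside Baire class $2$.
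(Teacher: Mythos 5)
Your proposal is correct and follows essentially the same route as the paper: write $K(u,x)=\lim_{m}\sup_{0<\epsilon<1/m}g_\epsilon(x)$, prove each $g_\epsilon$ is continuous on $W$ via continuity of $\nabla u$ on the differentiability set together with maximization of a jointly continuous function over the compact sphere, observe the supremum over small $\epsilon$ is of first Baire class (you realize it as an increasing limit of finite maxima over a countable dense set of $\epsilon$'s, the paper via lower semicontinuity --- the same idea), and conclude $K|_W$ is of second Baire class as a limit of Baire class one functions. Your extra care about negative $\epsilon$ and the value $+\infty$ is a harmless refinement, not a different argument.
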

\begin{proof}
We follow notes of Slodkowski, not contained in [9], for this proof. Let
$$f(x,\epsilon)= \frac{2}{\epsilon^2} \max \left\{ u(x+\epsilon h) - \epsilon \langle \nabla u(x), h \rangle : |h|=1    \right\}.$$
Then $K(x)= \limsup_{\epsilon >0} f(x, \epsilon).$ Since $u$ is convex, $\nabla u(x)$ is continuous on $W$, and so $f(x, \epsilon)$ is a continuous function on $W \times (0, \infty)$.

Next, let $$g(x,n)= \sup \left\{ f(x, \epsilon): 0< \epsilon< \frac{1}{n}  \right\}.$$
Since $g(\cdotp,n)$ is the supremum of a family of continuous functions it is lower semicontinuous, and thus the limit of an increasing sequence of continuous functions on $W$. Therefore,$g(\cdotp,n)$ is of first Baire class.

Now, note that $$\limsup_{\epsilon>0} f(x, \epsilon)= \lim_{n\rightarrow \infty}g(x,n),$$
and thus $K(x)$ is of second Baire class as it is the limit of Baire class one functions.
\end{proof}

\section{3. Dual Perspective}
\subsection{3.1 Background} 
Since $u$ is convex near $x_0$, it is natural to study this quantity $K(u,x_0)$ from the dual perspective as well. Let $Cvx(\mathbb{R}^n)$ denote the space of convex, lower semi-continuous functions on $\mathbb{R}^n$. Given a function $u\in Cvx(\mathbb{R}^n)$, one can apply  the Legendre--Fenchel transform $\mathscr{L}:Cvx(\mathbb{R}^n)\rightarrow Cvx(\mathbb{R}^n)$ of $u$ to obtain its conjugate or dual function $u^*$, where 
$$ u^*\equiv \mathscr{L}u(s)=\sup_{x} (\langle s,x\rangle-u(x)).$$
$\mathscr{L}$ is an order-reversing, involutive transform on $Cvx(\mathbb{R}^n)$, and for sufficiently nice convex functions (differentiable, strictly convex, and 1-coercive), $u^*$ is given by
$$u^*(s)=\langle s, (\nabla u)^{-1}(s) \rangle - u((\nabla u)^{-1}(s)).$$
The conjugate function $u^*$ can be viewed as a reparametrization of the original function $u$ in terms of its tangents using the duality between points and hyperplanes. More specifically, given a vector in $\mathbb{R}^n$, there is an associated family of hyperplanes with that gradient. $u^*$ distinguishes the one that supports the epigraph of $u$ by specifying a point on that plane.

For convex functions defined only in a neighbourhood it is standard to extend the function to all of $\mathbb{R}^n$ by setting it equal $+\infty$ outside that neighbourhood. In our case, we are given $u$ convex near $x_0$, so we extend it in this manner, if necessary. Clearly this does not affect $K(u,x_0)$, which is a purely local property. 
Recall the following basic definitions:
\begin{definition}{3.1}
\textit{The differentiable function $f:\mathbb{R}^n\rightarrow \mathbb{R}$ is convex if for all $x,x'\in \mathbb{R}^n$ $$f(x')\geq f(x)+ \langle \nabla f(x),(x'-x) \rangle,$$ and strictly convex if the inequality is strict for $x\neq x'$.}
\end{definition}
\begin{definition}{3.2}
\textit{The differentiable function $f:\mathbb{R}^n\rightarrow\mathbb{R}$ is strongly convex with modulus $c$ if and only if for all $(x,x')\in \mathbb{R}^n \times \mathbb{R}^n$,} 
\begin{align*}
f(x')\geq f(x)+ \langle\nabla f(x), (x' - x)\rangle +\frac{1}{2} c |x' - x|^2.
\end{align*}
\end{definition}
When $f$ is not differentiable a lot of analysis can still be done using the calculus of subdifferentials.
\begin{definition}{3.3}
\textit{Let $f:\mathbb{R}^n\rightarrow\mathbb{R}$ be convex. The subdifferential of $f$, denoted $\partial f$, is a set function, where $\partial f(x)= \left\{ s\in \mathbb{R}^n: f(y)\geq f(x)+\langle s,y-x\rangle \text{  }  \forall y\in \mathbb{R}^n \right\}.$}
\end{definition}
Under the Legendre transform, differentiability of $u$ corresponds to convexity or monotonicity of $u^*$. Recall from Proposition 1.8, two properties that transform especially well are (i) $u\in C^1$ if and only if $u^*$ is strictly convex, and (ii) $u\in C^{1,1}$, where $\nabla u$ has Lipschitz constant $c$ if and only if $u$ is strongly convex with modulus $\frac{1}{c}$. 

\subsection{3.2 Quadratic convexity}
In this section we look at how a bound on $K(u,x_0)$ or equivalently a sphere of support to the graph of $u$ at $(x_0,u(x_0))$ transforms to a property of $u^*$. More specifically, since $K$ or a sphere of support is a bound on a generalized second-order derivative of $u$, how does this translate to information about the convexity of $u^*$? We should expect a more localized property then in Proposition 1.8, as we only have information at $x_0$. Further, we are not assuming any regularity beyond differentiable at $x_0$.

Now, strong convexity may also defined in terms of quadratic functions: $u$ is strongly convex with modulus $m$ if $u-\frac{1}{2}m|x|^2$ is convex. Similarly, quasi-convexity, is defined via quadratics: $u$ is $\lambda$- quasi-convex if $u+ \frac{1}{2}\lambda |x|^2$ is convex.

Let $u:\mathbb{R}^n \rightarrow \mathbb{R}$ be convex with $K(u,x_0)=k_0 <\infty$. By the definition of $K(u,x)$, for any $k>k_0$ there exists $\epsilon >0$ such that
$$u(x_0+h)-u(x_0)-\langle \nabla u(x_0), h \rangle \leq \frac{1}{2} k|h|^2, \text{ for all } |h|< \epsilon .$$
This motivates the following definition.
\begin{definition}{3.4}
\textit{Let $f:\mathbb{R}^n \rightarrow \mathbb{R}$ be convex. Then $f$ is quadratically (resp. sub-quadratically) convex at $x_0$ with modulus $m>0$ if there exists $\epsilon >0$ and a quadratic function $Q:\mathbb{R}^n\rightarrow \mathbb{R}$ with $\nabla^2 Q=mI$ such that 
$$f(x_0)=Q(x_0) \text{ and } f(x) \geq Q(x),\hspace{6.6mm} \forall x\in B(x_0,\epsilon)$$ resp. 
$$f(x_0)=Q(x_0) \text{ and } f(x) \leq Q(x),\hspace{6.6mm} \forall x\in B(x_0,\epsilon).$$ }
\end{definition}

\begin{example}{3.5}
$f(x)=|x|^{4/3}$ is quadratically convex at 0, but not sub-quadratically convex at 0. Note also that $K(f,0)=+\infty$ and it does not have a sphere of support at 0.
\end{example}

\begin{example}{3.6}
More generally, consider any function of the form $f(x)=A|x|^k$, at $x=0$. If $0<k<1$, $f$ is not convex. If $k=1$, $f$ is quadratically convex at 0, but not sub-quadratically convex. If $1<k<2$ then $f$ is strictly convex and quadratically convex but not sub-quadratically convex. If $k=2$, $f$ is both quadratically convex and sub-quadratically convex. If $k>2$, $f$ is sub-quadratically convex but not quadratically convex. 
\end{example}
If $f$ is of the form $f=\frac{|x|^k}{k}$, then $f^*=\frac{|y|^q}{q}$, where $\frac{1}{k}+\frac{1}{q}=1$. So, in general, given that the Legendre-Fenchel transform is order-reversing and quadratics are transformed into quadratics, it follows that if $f$ is quadratically convex, $f^*$ is sub-quadratically convex. For a convex $C^2$ function $f$, if $\nabla^2f(x_0)$ is positive definite then $f$ is both quadratically and sub-quadratically convex at $x_0$.
\begin{proof}[Proof of Theorem 1.9]
Suppose $K(u,x_0)=k_0<\infty$. As stated above, by definition of $K(u,x_0)$, for any $k>k_0$, there exists $\epsilon >0$ such that $u$ satisfies 
$$u(x)-u(x_0)-  \langle \nabla u(x_0), x-x_0 \rangle \leq \frac{1}{2} k|x-x_0|^2,$$ for all $x\in B(x_0,\epsilon)$. Thus, on this neighbourhood of $x_0$
$$u(x) \leq u(x_0)+  \langle \nabla u(x_0), x-x_0 \rangle +\frac{1}{2} k|x-x_0|^2.$$
By assumption $u$ is convex, and $k>k_0\geq 0$, so the right-hand side is also convex. Taking the Legendre transform gives
$$ u^*(y)\geq \langle \nabla u(x_0),x_0 \rangle - u(x_0) +\langle x_0, y- \nabla u(x_0)\rangle+ \frac{1}{2}k\left|\dfrac{y-\nabla u(x_0)}{k}\right|^2.$$
Now $u^*$ may not be differentiable at $\nabla u(x_0)$, however $\nabla u(x_0) \in \partial u(x_0)$ if and only if $x_0\in \partial u^*(\nabla u(x_0))$, which is equivalent to $u^*(\nabla u(x_0))=\langle \nabla u(x_0), x_0 \rangle -u(x_0)$. So the above inequality simplifies to
$$u^*(y)\geq u^*(\nabla u(x_0))+ \langle x_0, y- \nabla u(x_0) \rangle + \frac{1}{2k}|y-\nabla u(x_0)|^2.$$
Note that there is equality at $y_0=\nabla u(x_0)$ and the Hessian of the right-hand side is $\frac{1}{k}I$ so $u^*$ is quadratically convex with modulus $\frac{1}{k}$.

On the other hand, if $u^*$ is quadratically convex at $y_0=\nabla u(x_0)$ with modulus $\frac{1}{k}$ then $u$ will be sub-quadratically convex with modulus $k$ at $x_0$, and it follows that $K(u,x_0)\leq k$.
\end{proof}
In the above proof we do not need to worry about $\partial u(B(x_0,\epsilon))$ being degenerate (for example if $u$ is locally a hyperplane at $x_0$) because in that case $u^*(y)$ will then be $+\infty$ away from $\nabla u(x_0)$ so clearly the inequality will hold on some neighbourhood.

Our goal now is to obtain the nice bound on $K(u,x)$ in Proposition 1.6 using the dual function, given a sphere of support to the graph of $u$ at $(x,u(x))$. The following elementary lemma, which we state without proof, will help us to reduce arguments on $\mathbb{R}^n$ to ones on $\mathbb{R}$.
\begin{lemma}{3.7}
\textit{Let $S_{r}$ be an $n$-sphere with radius $r$ in $\mathbb{R}^{n+1}$, centered at $(0,...,0,r)$, and let $d:\mathbb{R}^{n}\rightarrow \mathbb{R}$ be the function defined by the lower hemisphere, i.e., for $z\in B_n (0,r)$, $d(z)= r- \sqrt{r^{2}-|z|^{2}}$. Then for any $x\in B_n(0,r)$ and $v \in \mathbb{R}^n$, $|v|=1$, the graph of $\psi:I \subset \mathbb{R}\rightarrow \mathbb{R}^{n+1}$ defined by $\psi(t)=d(x+tv)$ is a lower semi-circle in $\mathbb{R}^{n+1}$ of radius $\leq r$, where $I=(-\epsilon, \epsilon')$ is of maximal length.}
\end{lemma}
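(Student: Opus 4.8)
The plan is to recognize the graph of $\psi$ as an arc of the circle obtained by slicing the sphere $S_r$ with the unique vertical $2$-plane containing the line $t\mapsto x+tv$, and then to check that this arc is exactly the open lower semicircle. First I would introduce coordinates adapted to $v$: write $x = av + x^{\perp}$ with $a=\langle x,v\rangle$ and $x^{\perp}\perp v$ in $\mathbb{R}^n$, and let $P\subset\mathbb{R}^{n+1}$ be the affine $2$-plane $P = x^{\perp} + \operatorname{span}\{v,e_{n+1}\}$, where I identify $v$ with $(v,0)\in\mathbb{R}^{n+1}$. Every point $(x+tv,\,d(x+tv))$ of the graph of $\psi$ lies in $P$ by construction, and since the defining formula for $d$ is equivalent to $|z|^2 + (d(z)-r)^2 = r^2$ for $z\in B_n(0,r)$, the graph of $\psi$ is contained in $S_r\cap P$.

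Next I would determine $S_r\cap P$. The center of $S_r$ is $c = r\,e_{n+1}$, and because $c\perp v$ and $c\perp x^{\perp}$ (the latter since $x^{\perp}\in\mathbb{R}^n$), a one-line orthogonal-projection computation shows that the point of $P$ closest to $c$ is $x^{\perp}+r\,e_{n+1}$, so $\operatorname{dist}(c,P) = |x^{\perp}| = \sqrt{|x|^2-\langle x,v\rangle^2}=:\rho$. Since $\rho\le|x|<r$, the section $S_r\cap P$ is a genuine circle, centered at $x^{\perp}+r\,e_{n+1}$ inside $P$ and of radius $\sqrt{r^2-\rho^2}\le r$; this already accounts for the radius bound in the statement.

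Finally I would identify which arc of this circle is swept out. Using $|v|=1$ one has $|x+tv|^2 = \rho^2 + (t+a)^2$, so $x+tv\in B_n(0,r)$ precisely when $|t+a|<\sqrt{r^2-\rho^2}$; hence the maximal interval of definition is $I=\bigl(-a-\sqrt{r^2-\rho^2},\ -a+\sqrt{r^2-\rho^2}\bigr)$, which contains $0$ because $r^2-\rho^2 = r^2-|x|^2+a^2 > a^2$. As $t$ ranges over $I$, the height $d(x+tv) = r-\sqrt{r^2-|x+tv|^2}$ takes every value in $[\,r-\sqrt{r^2-\rho^2},\,r)$, i.e.\ exactly the heights strictly below the "equator" level $r$ of the circle $S_r\cap P$, with the two endpoints of $I$ limiting onto the two equator points. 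Therefore the graph of $\psi$ is exactly the open lower semicircle of $S_r\cap P$, of radius $\sqrt{r^2-\rho^2}\le r$, as claimed.

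All of these computations are elementary; the only step that needs genuine care is the last one — verifying that the arc traced out is a full semicircle and no more, that the displayed $I$ really is the maximal interval of definition, and that $0\in I$ — and this is precisely where the strict inequality $\rho<r$, equivalently $\sqrt{r^2-\rho^2}>|a|$, is used.
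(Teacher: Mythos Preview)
The paper explicitly states Lemma 3.7 without proof, calling it an ``elementary lemma''; there is no argument in the paper to compare your attempt against. Your proof is correct and is exactly the kind of computation the author had in mind: intersecting $S_r$ with the vertical $2$-plane through the affine line $t\mapsto x+tv$ yields a circle of radius $\sqrt{r^2-\rho^2}\le r$ (with $\rho=|x^{\perp}|$), and your substitution $|x+tv|^2=(t+a)^2+\rho^2$ identifies $\psi$ with the lower-hemisphere function of that smaller circle, defined on the maximal open interval $|t+a|<\sqrt{r^2-\rho^2}$ containing $0$.
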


\begin{proposition} {3.8}
\textit{Let $f:\mathbb{R}^n\rightarrow \mathbb{R}$ be $C^2$ and convex and suppose there exists a sphere of support to the graph of $f$ at $(x_0,f(x_0))$ of radius $r$. Then} $$K(f,x_0) \leq \dfrac{(1+\nabla f|_{x_0}^2)^\frac{3}{2}}{r}.$$
\end{proposition}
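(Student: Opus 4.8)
The plan is to re-derive Proposition 1.6(iii) for $C^2$ functions by reducing the computation to the one-dimensional picture isolated in Lemma 3.7 (and, as a remark, by passing to the dual). A sphere of support $S(c,r)$ to $\mathrm{graph}(f)$ at $(x_0,f(x_0))$ produces, as in \S 2.2, the lower-hemisphere function $d$ whose graph is the bottom of $S(c,r)$; it satisfies $d\ge f$ on a neighbourhood of $x_0$, with $d(x_0)=f(x_0)$ and $\nabla d(x_0)=\nabla f(x_0)$. Since the graph of a real-valued function has no vertical tangent plane, the contact point lies in the \emph{open} lower hemisphere, so $d$ is $C^\infty$ near $x_0$; hence, by the chain of equalities in \S 2.2, $K(f,x_0)\le K(d,x_0)=\lambda_{\max}\!\big(\nabla^2 d(x_0)\big)$, and it remains only to show $\lambda_{\max}\!\big(\nabla^2 d(x_0)\big)=(1+|\nabla f(x_0)|^2)^{3/2}/r$.

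If $\nabla f(x_0)=0$ the point $x_0$ sits directly below the centre of $S(c,r)$, $d$ is a spherical cap with horizontal tangent there, $\nabla^2 d(x_0)=\tfrac1r I$, and the claim reads $\tfrac1r=(1+0)^{3/2}/r$. So assume $\nabla f(x_0)=\nabla d(x_0)\ne 0$ and set $v=\nabla d(x_0)/|\nabla d(x_0)|$. By Proposition 2.1, $v$ is an eigenvector of $\nabla^2 d(x_0)$ for the largest eigenvalue, so $\lambda_{\max}\!\big(\nabla^2 d(x_0)\big)=\langle\nabla^2 d(x_0)v,v\rangle=\psi''(0)$ with $\psi(t)=d(x_0+tv)$ as in Lemma 3.7. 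That lemma says $\mathrm{graph}(\psi)$ is an arc of a lower semicircle of radius $\le r$; the crucial point is that in \emph{this} direction the radius equals $r$ exactly. Indeed, the centre $c$ lies on the normal line to $\mathrm{graph}(d)$ at $(x_0,d(x_0))$, whose direction $(-\nabla d(x_0),1)$ is a combination of $(v,0)$ and $(0,\dots,0,1)$; thus $c$, and hence the whole arc $\mathrm{graph}(\psi)\subset S(c,r)$, lies in the affine $2$-plane through $(x_0,d(x_0))$ spanned by those two vectors, so $\mathrm{graph}(\psi)$ is an arc of a great circle of $S(c,r)$. Therefore $\psi$ has curvature $1/r$ at $0$; combined with $\psi'(0)=\langle\nabla d(x_0),v\rangle=|\nabla d(x_0)|=|\nabla f(x_0)|$ and the plane-curve identity $\kappa=\psi''/(1+\psi'^2)^{3/2}$, this gives $\psi''(0)=(1+|\nabla f(x_0)|^2)^{3/2}/r$, as desired.

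I expect the ``radius exactly $r$'' step to be the point requiring the most care: one must see that the gradient direction distinguished by Proposition 2.1 is precisely the one in which Lemma 3.7's semicircle becomes a great circle, and then do the (elementary but sign-sensitive) bookkeeping converting $\psi''(0)$ into $1/r$ so that the factor $(1+|\nabla f(x_0)|^2)^{3/2}$ emerges, i.e. the osculating-circle computation of \S A.3 carried out in one variable. As an alternative, the bound follows purely dually from Theorem 1.9: restricting $d$ to a small disc about $x_0$ and extending by $+\infty$ forces $f^*\ge d^*$ on a neighbourhood of $y_0=\nabla f(x_0)$ with equality at $y_0$, while a direct computation (with $c=(c',c_{n+1})$) gives $d^*(y)=\langle c',y\rangle-c_{n+1}+r\sqrt{1+|y|^2}$, whose Hessian at $y_0$ has least eigenvalue $r/(1+|y_0|^2)^{3/2}$; hence $f^*$ is quadratically convex at $y_0$ with every modulus $m<r/(1+|y_0|^2)^{3/2}$, and the converse half of Theorem 1.9 together with $m\uparrow r/(1+|y_0|^2)^{3/2}$ yields $K(f,x_0)\le(1+\nabla f|_{x_0}^2)^{3/2}/r$.
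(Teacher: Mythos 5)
Your proof is correct, but your primary route is not the one the paper takes for Proposition 3.8. The paper's proof is deliberately dual: it uses the Hessian-inverse identity $\nabla^2 f^*(y_0)=\nabla^2 f(x_0)^{-1}$ (assuming invertibility ``without loss of generality''), deduces $\lambda_{\max}\leq 1/\gamma^*_{\min}$ from $f^*\geq d^*$ with tangency at $y_0$, and only then reduces to one variable via Proposition 2.1 and Lemma 3.7 to compute the one-dimensional conjugate $d^*(y)=-t+cy+r\sqrt{1+y^2}$. Your main argument instead stays on the primal side: it is essentially the \S 2.2 proof of Proposition 1.6(iii), with the Taylor-expansion evaluation of $\lambda_{\max}(\nabla^2 d(x_0))$ replaced by the observation that the slice of the hemisphere in the gradient direction is a great circle of radius exactly $r$ (because the centre lies on the normal line, hence in the $2$-plane spanned by $(v,0)$ and $e_{n+1}$), so the plane-curve identity $\kappa=\psi''/(1+\psi'^2)^{3/2}$ with $\kappa=1/r$ and $\psi'(0)=|\nabla f(x_0)|$ yields the bound at once; this is a genuine simplification of the paper's computation, and your treatment of the case $\nabla f(x_0)=0$ and of the smoothness of $d$ at the contact point (which must lie in the open lower hemisphere since $f$ is differentiable) is sound. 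Your sketched dual alternative is the one closer to the paper, and is in fact cleaner than the paper's version: computing $d^*(y)=\langle c',y\rangle-c_{n+1}+r\sqrt{1+|y|^2}$ directly in $n$ variables and feeding its least Hessian eigenvalue $r/(1+|y_0|^2)^{3/2}$ into the converse half of Theorem 1.9 avoids both the invertibility assumption on $\nabla^2 f(x_0)$ and the one-dimensional reduction (and, as in the paper's Proposition 3.9, does not really need $f\in C^2$). The only point to make explicit in a final write-up is the localization in that dual step: the conjugate of $d$ restricted to a small disc agrees with the displayed formula only for $y$ near $y_0$, which is all that quadratic convexity at $y_0$ requires.
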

\begin{proof}
Because $f$ is $C^2$, $K(f,x_0)$ is the largest eigenvalue $\lambda_{max}$ of $\nabla^2 f(x_0)$. If $\lambda_{max}$=0 or $\nabla f(x)=0$ then the bound on $K(f,x_0)$ is trivial, so let $\lambda_{max}>0$ and $\nabla f(x)\neq 0$. $f$ is convex so $\nabla^2 f(x_0)$ is symmetric positive semi-definite, and there exists an orthonormal basis of eigenvectors. Let $v$ be the eigenvector coresponding to $\lambda_{max}$. By duality, $v$ is also an eigenvector corresponding to $\lambda^*_{min}= \frac{1}{\lambda_{max}}$, the smallest eigenvalue of $\nabla^2 f^*(\nabla f(x_0))$. This follows from the fact that the Hessians of dual functions satisfy
$$
\nabla^2 f^*(y_0) = \nabla^2 f(x_0)^{-1}, \hspace{10mm} \text{ where }y_0 = \nabla f(x_0).
$$ (Here we assume without loss of generality that $\nabla^2 f(x_0)$ is invertible because we are only concerned with $\lambda_{max}>0$).\\
Let $S((c,t),r)$ be the sphere of support of radius $r$, to the graph of $f$ at $x_0$, and $d$ the associated lower hemisphere function, i.e.
\begin{align*}
d(x) = & t - \sqrt{r^2 -|x-c|^2},\hspace{2mm} x\in \bar{B}(c,r)\\
d(x) =  &\infty, \text{ else}.
\end{align*}
Clearly $d$ is convex and $d \geq f$, by definition of a supporting sphere. Also, recall that $f$ and $d$ agree up to first order at $x_0$. 

Again by basic properties of the Legendre transform, the following relations hold:
\begin{align*}
f^*(y_0)=d^*(y_0) \hspace{7mm} f^* \geq d^* \hspace{7mm} \nabla f^*(y_0)= \nabla d^*(y_0)= x_0.
\end{align*}
It follows that  $$\lambda^*_{min} \geq \gamma^*_{min}$$ where $\gamma^*_{min}$ is the smallest eigenvalues of $\nabla ^2 d^*(\nabla f(x_0))$. Note that this is equivalent to 
$$\lambda_{max}\leq \frac{1}{\gamma^*_{min}}.$$
Given this bound, we now show that $\gamma^*_{min}$ can always be computed using a function on $\mathbb{R}$.

Let $v'$ be the unit-length eigenvector corresponding to $\gamma^*_{min}$ and $\gamma_{max}$. By Proposition 2.1, $v'$ is in the direction of $\nabla d(x_0).$ By Lemma 3.7, $\tilde {d}$, the restriction of $d$ to this $1-$dimensional subspace defines a lower semi-circle function, and this function has the properties: $\tilde{d}'(x_0)= \langle \nabla d(x_0),v' \rangle = |\nabla d(x_0)|$ and $\tilde{d}''(x_0) = \gamma_{max}.$ Therefore, the dual function $\tilde{d}^*$ has second derivative at $|\nabla f(x_0)|$ equal to $ \gamma^*_{min}$, and so we may assume without loss of generality that $f$ and $d$ are functions on $\mathbb{R}$.

Now we compute $d^*$ directly by using the Legendre transforms of common functions. Rewriting $d$
\begin{align*}
d(x) = & t - \sqrt{r^2 -(x-c)^2}\\
=& t - r \sqrt{1 -\left(\frac{x}{r}-\frac{c}{r}\right)^2},
\end{align*}
and then applying the following well-known conjugate pairs:
\begin{align*}
h(x)= - \sqrt{1-x^2} &\hspace{.5 in} h^*(y)=\sqrt{1+y^2}\\
g(x)= \alpha + \beta x + \gamma u(\lambda x + \delta) & \hspace{.5 in} g^*(x)= -\alpha -\delta \dfrac{y-\beta}{\lambda} + \gamma u^*(\dfrac{y- \beta}{\gamma \lambda}),
\end{align*}
gives
\begin{align*}
d^{*}(y)=& -t +cy + r \sqrt{1+y^2}\\
\frac{d}{dy}d^{*}(y)=& c + \dfrac{ry}{\sqrt{1+y^2}}\\
\frac{d^2}{dy^2} d^{*}(y)=& \dfrac{r}{(1+y^2)^\frac{3}{2}}.
\end{align*}

Thus,
\begin{align*}
K(f,x_0)=\lambda_{max} \leq & \dfrac{1}{ \frac{d^2}{dy^2} d^{*}(|\nabla f(x_0)|)}= \dfrac{(1+|\nabla f(x_0)|^2)^\frac{3}{2}}{r}.
\end{align*}
\end{proof}
The more general case, where $f$ is not assumed to be $C^2$, will use Proposition 3.8 and quadratic convexity of the dual.
\begin{proposition}{3.9}
\textit{Let $f:\mathbb{R}^n \rightarrow \mathbb{R}$ be convex with a sphere of support at $x_0$ of radius $r$. Then $K(f,x_0) \leq \dfrac{(1+\nabla f|_{x_0}^2)^\frac{3}{2}}{r}$.}
\end{proposition}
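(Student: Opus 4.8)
The plan is to reduce to the $C^2$ case of Proposition 3.8 by replacing $f$ with the lower hemisphere function $d$ of the supporting sphere, and then to transport the estimate to $f$ through the Legendre transform, using that the relevant quadratic convexity of the conjugate can be read off from the majorant $d$ even though $f$ need not be twice differentiable.

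First I would note that a sphere of support at $x_0$ forces $f$ to be differentiable there (the unique supporting hyperplane at the contact point is the tangent plane of the sphere), so $y_0:=\nabla f(x_0)$ is well-defined. Write $S((c,t),r)$ for the supporting sphere at $(x_0,f(x_0))$ and $d$ for the associated lower hemisphere function, extended by $+\infty$ off $\bar B(c,r)$. Since the open ball $B(c,r)$ lies in $\text{epi}(f)$ (being connected, disjoint from $\text{graph}(f)$, and meeting the interior of $\text{epi}(f)$), one gets $d\geq f$ on $\mathbb{R}^n$, with $d(x_0)=f(x_0)$ and $\nabla d(x_0)=y_0$, and $x_0$ lies in the open disc on which $d$ is smooth. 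By the Hessian formula of \S 2.2 the matrix $\nabla^2 d(x_0)$ is positive definite, so, exactly as in the proof of Proposition 3.8, $d^*$ is $C^2$ near $y_0$ with $\nabla^2 d^*(y_0)=\nabla^2 d(x_0)^{-1}$, whose smallest eigenvalue equals $\mu:=1/K(d,x_0)=\dfrac{r}{(1+|y_0|^2)^{3/2}}$.

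Next I would pass to conjugates. The transform $\mathscr{L}$ is order-reversing, so $f^*\geq d^*$ on $\mathbb{R}^n$; and it preserves the first-order contact at the dual point, giving $f^*(y_0)=d^*(y_0)$ (the Young equality $f^*(y_0)=\langle y_0,x_0\rangle-f(x_0)$), $x_0\in\partial f^*(y_0)$, and $\nabla d^*(y_0)=x_0$. Fix any $\mu'<\mu$. A second-order Taylor expansion of the $C^2$ function $d^*$ at $y_0$, together with continuity of $\nabla^2 d^*$, shows that on a small ball $d^*(y)\geq d^*(y_0)+\langle x_0,\,y-y_0\rangle+\tfrac{\mu'}{2}|y-y_0|^2=:Q(y)$. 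Since $f^*\geq d^*$ and $f^*(y_0)=Q(y_0)$, the same bound holds for $f^*$, i.e.\ $f^*$ is quadratically convex at $y_0$ with modulus $\mu'$. The converse half of Theorem 1.9 then yields $K(f,x_0)\leq 1/\mu'$, and letting $\mu'\to\mu$ gives $K(f,x_0)\leq 1/\mu=\dfrac{(1+|\nabla f(x_0)|^2)^{3/2}}{r}$.

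The step I expect to require the most care is the passage through the possibly non-differentiable $f^*$: one must check $x_0\in\partial f^*(y_0)$ so that the comparison quadratic $Q$ carries the correct linear part, and one must keep clear that it is quadratic convexity at the single point $y_0$ --- not one of the global notions of Proposition 1.8 --- that Theorem 1.9 consumes. The remaining ingredients (positivity of $\nabla^2 d(x_0)$, inversion of Hessians under $\mathscr{L}$, and the one-dimensional evaluation of $\mu$ carried out in the proof of Proposition 3.8 via Proposition 2.1 and Lemma 3.7) are already in hand, so that part is bookkeeping. I note in passing that a purely primal route also works --- $d\geq f$ near $x_0$ with equal value and gradient forces $K(f,x_0)\leq K(d,x_0)$ directly from Definition 1.1, and $K(d,x_0)=\tfrac{(1+|\nabla f(x_0)|^2)^{3/2}}{r}$ by \S 2.2 --- but the dual argument above is the one in keeping with this section.
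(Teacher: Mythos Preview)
Your proposal is correct and follows essentially the same route as the paper: majorize $f$ by the lower hemisphere function $d$, dualize to get $f^*\geq d^*$ with first-order contact at $y_0=\nabla f(x_0)$, read off the smallest eigenvalue $\mu=r/(1+|y_0|^2)^{3/2}$ of $\nabla^2 d^*(y_0)$ from Proposition~3.8, conclude that $f^*$ is quadratically convex at $y_0$ with any modulus $\mu'<\mu$, and invoke the converse direction of Theorem~1.9 (equivalently, sub-quadratic convexity of $f$ at $x_0$) to bound $K(f,x_0)\le 1/\mu'$, then let $\mu'\uparrow\mu$. Your added remarks on differentiability of $f$ at $x_0$ and on $x_0\in\partial f^*(y_0)$ are exactly the care the paper's proof implicitly assumes, and your aside about the primal shortcut via $K(f,x_0)\leq K(d,x_0)$ is accurate but, as you note, outside the spirit of this section.
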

\begin{proof}
Let $d$ be the lower hemisphere function. Then $d(x_0)=f(x_0)$, and $$ d\geq f \Rightarrow f^*\geq d^*.$$ If $y_0=\nabla f(x_0)$ (which exists since there is a sphere of support) then $$d^*(y_0)=f^*(y_0) \text{ and } \nabla d^*(y_0)\in \partial f^*(y_0).$$ From Proposition 3.8 the smallest eigenvalue of $\nabla ^2 d^*(y_0)$ is equal to $\frac{r}{(1+|y_0|^2)^{\frac{3}{2}}}$, so for any $m < \frac{r}{(1+|y_0|^2)^{\frac{3}{2}}}$ there exists a neighbourhood $U$ of $x_0$ such that $$ f^*(y)\geq d^*(y)\geq d^*(y_0) + \langle \nabla d^*(y_0), y-y_0\rangle + \frac{1}{2} m|y-y_0|^2.$$ Thus, $f^*$ is quadratically convex with modulus $m$. 

It follows that $f= (f^*)^*$ is sub-quadratically convex at $x_0$ with modulus $\frac{1}{m}.$ Let $Q_m$ be a satisfying quadratic. This implies that 
$$K(f,x_0)\leq K(Q_m,x_0)=\frac{1}{m},$$ and since this holds for any $m< \frac{r}{(1+|y_0|^2)^{\frac{3}{2}}}$, $$K(f,x_0)\leq\frac{(1+|y_0|^2)^{\frac{3}{2}}}{r}=\frac{(1+|\nabla f(x_0)|^2)^{\frac{3}{2}}}{r}.$$
\end{proof}

\section{Appendix}
\subsection{A.1 Lipschitz gradient}
Here we show that the generalized derivative $K(f,x)$ retains the following standard property regarding the derivative of a Lipschitz continuous function.
\begin{proposition}{A.1}
Suppose $f:\mathbb{R}^n\rightarrow \mathbb{R}$ is convex and $C^{1,1}$ (i.e $f$ is differentiable and has Lipschitz gradient), with Lipschitz constant $L$. Then $K(f,x)\leq L$ for all $x$. 
\end{proposition}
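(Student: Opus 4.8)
The plan is to bound the directional second difference quotient in Definition 1.1 directly, using only the Lipschitz bound on $\nabla f$. Fix a point $x$ and a unit vector $h\in S^{n-1}$, and consider the one-variable function $\phi(t) = f(x+th) - f(x) - t\langle \nabla f(x),h\rangle$. Since $f$ is $C^1$, $\phi$ is $C^1$ with $\phi(0)=0$, $\phi'(0)=0$, and $\phi'(t) = \langle \nabla f(x+th) - \nabla f(x),h\rangle$. The $L$-Lipschitz bound on $\nabla f$ gives $|\phi'(t)| \leq |\nabla f(x+th)-\nabla f(x)| \leq L\,|th| = L|t|$ (here $|h|=1$). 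Integrating from $0$ to $\epsilon$ yields $\phi(\epsilon) = \int_0^\epsilon \phi'(t)\,dt \leq \int_0^\epsilon L t\,dt = \tfrac{1}{2}L\epsilon^2$, and the same bound holds uniformly over all $h\in S^{n-1}$ since $L$ does not depend on $h$.

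Plugging this into the definition, for every $\epsilon>0$ we get
$$
2\epsilon^{-2}\,\max\{\,f(x+\epsilon h)-f(x)-\epsilon\langle \nabla f(x),h\rangle : h\in S^{n-1}\,\} \;\leq\; 2\epsilon^{-2}\cdot \tfrac{1}{2}L\epsilon^2 \;=\; L,
$$
so taking $\limsup_{\epsilon\to 0}$ gives $K(f,x)\leq L$. Since $\nabla f(x)$ exists everywhere (as $f$ is $C^{1,1}$), this holds for all $x$, which is exactly the claim.

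There is essentially no obstacle here: the only mild points to be careful about are that the $\limsup$ in Definition 1.1 is over $\epsilon\to 0$ (one can restrict to $\epsilon>0$ by symmetry, or note the bound $|\phi(\epsilon)|\leq \tfrac12 L\epsilon^2$ holds for $\epsilon<0$ as well by the same integration argument), and that the Lipschitz constant $L$ is genuinely uniform in $h$, which is what lets the bound survive passing to the maximum over $S^{n-1}$. An alternative route would be to invoke Proposition 1.6(i): away from the measure-zero set where the Peano Hessian fails, $K(f,x)$ equals the operator norm of $D^2 f(x)$, which is $\leq L$ since the Lipschitz constant of $\nabla f$ dominates $\|D^2 f\|$ a.e.; then Theorem 1.2 would upgrade the a.e. bound to an everywhere bound. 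But the direct integration argument above is cleaner and self-contained, so that is the one I would write.
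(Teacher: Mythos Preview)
Your proof is correct and follows essentially the same approach as the paper: reduce to the one-variable function $\phi(t)=f(x+th)-f(x)-t\langle\nabla f(x),h\rangle$, bound $|\phi'(t)|\leq L|t|$ via the Lipschitz condition on $\nabla f$, and conclude $\phi(\epsilon)\leq\tfrac{1}{2}L\epsilon^{2}$. The only cosmetic difference is that the paper obtains this bound via the Cauchy mean value theorem applied to $\phi_1(\epsilon)=f(x_0+\epsilon h)-\epsilon\langle\nabla f(x_0),h\rangle$ and $\phi_2(\epsilon)=\epsilon^{2}$, whereas you integrate $\phi'$ directly; both arrive at the same inequality with the same constants.
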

\begin{proof}
Let $x_0\in \mathbb{R}^n$. $$K(f,x_0) := \limsup_{\epsilon \rightarrow 0} 2\epsilon^{-2} \text{ max } \{ f(x_0 + \epsilon h)  - f(x_0) - \epsilon \langle \nabla f(x_0), h \rangle : |h| =1 \},$$
which can be can written as
$$K(f,x_0)=  \limsup_{\epsilon \rightarrow 0} \text{ max } \left\{ 2 \dfrac{f(x_0 + \epsilon h)  - f(x_0) - \epsilon \langle \nabla f(x_0), h \rangle }{\epsilon^2} : |h|= 1 \right\}.$$
Differentiability lets us use the Cauchy mean value theorem. Let $\phi_1(\epsilon)= f(x_0+\epsilon h) -\epsilon \langle \nabla f(x_0),h\rangle$, and $\phi_2(\epsilon)=\epsilon^2$. Note that 
$$2 \dfrac{f(x_0 + \epsilon h)  - f(x_0) - \epsilon \langle \nabla f(x_0), h \rangle }{\epsilon^2}= 2 \dfrac{\phi_1(\epsilon)- \phi_1(0)}{\phi_2(\epsilon)- \phi_2(0)}.$$
Thus, there exists $\gamma\in (0,\epsilon)$ such that 
\begin{align*}
2 \dfrac{\phi_1(\epsilon)- \phi_1(0)}{\phi_2(\epsilon)- \phi_2(0)} =2 \dfrac{\phi_1'(\gamma)}{\phi_2'(\gamma)} = & \dfrac{\langle\nabla f(x_0+\gamma h),h\rangle - \langle\nabla f(x_0),h\rangle}{\gamma}\\
= & \dfrac{\langle\nabla f(x_0+\gamma h)- \nabla f(x_0) ,h\rangle }{\gamma}\\
\leq & \dfrac{|\nabla f(x_0+\gamma h)- \nabla f(x_0)| }{\gamma} \leq L
\end{align*}
Therefore $K(f,x_0)\leq L$, and thus $\frac{1}{K(f,x_0)}$ bounds the modulus of convexity of $f^*$, for any $x_0$.
\end{proof}
\subsection{A.2 Example of a non $C^{1,1}$ function with a sphere of support}
\begin{example}{A.2} It may seem that since a bound on $K(u,x)$ implies a sphere of support to the graph of $u$ at $(x,u(x))$, that this in turn implies some kind Lipschitz continuity of the gradient in a small neighbourhood of $x$. Here we construct an example of a strictly convex function $f$ that is $C^1$ and twice differentiable with $K(f,0)<\infty$, but with gradient not Lipschitz in any neighbourhood of 0, to show this is not the case.
Let $f:[-1,1]\rightarrow \mathbb{R}$ be given by $f(0)=0$, and for $x\geq 0$
\begin{align*}
f'(x)=\int_0^x \gamma(t)dt, \hspace{5mm} \text{where } \gamma(t):=n+4 \text{ on } I_n \text{ and } 0 \text{ otherwise,}
\end{align*}
with $I_n= \dfrac{1}{(n+4)^2}[1-\dfrac{1}{(n+4)^2},\hspace{2mm}1]$. Define $f'(-x):=-f'(x)$. 

Then $f'$ is clearly increasing and so $f$ is convex. And for $x_n =\dfrac{1}{(n+4)^2}$,
$$f'(x_n)=\int_{0}^{x_1}\gamma(t)\,dt=\sum_{k\geq n}\frac{1}{(k+4)^3}\leq\int_{n+3}^{\infty}\frac{dt}{t^3}= \frac{1}{2(n+3)^2}<\frac{1}{(n+4)^2}=x_n.$$\\
So we have $f'(x)\leq x$ for all $x \in [0,1]$ and $f'(x)\geq x$ for all $x \in [-1,0]$. Since $d'(x)\geq x$ for all $x \in [0,1]$ and $d'(x)\leq x$ for all $x \in [-1,0]$, it follows that the graph of $d$, and thus the unit circle centered at $(0,1)$, is always at or above the graph of $f$, with $f(0)=d(0)$. Therefore, $f$ has a sphere of support at $x_0=0$.

However, there exist sequences $\{x_i\}, \{x_j\}$ such that
$$\dfrac{f'(x_i)-f'(x_j)}{x_i-x_j}$$ blows up: Taking $x_i$ and $x_j$ as the endpoints of $I_n$,
$$ \dfrac{f'(x_i)-f'(x_j)}{x_i-x_j} = \frac{1}{x_i-x_j}\left( \int_0^{x_i} \gamma(t)dt - \int_0^{x_j} \gamma(t)dt\right) = (n+4)^4 \int_{x_j}^{x_i} n+4 dt=n+4.$$
We can make $f$ strictly convex by adding an $x^m$ term, which does not affect any of the above analysis. The above example can be adjusted to show that $f'$ is not $\alpha$-Holder continuous for any $\alpha$.
\end{example}
\subsection{A.3 Osculating and locally supporting spheres }
Here we extend the concept of an osculating circle to a plane curve to that of an \textquotedblleft osculating sphere\textquotedblright to the graph of a function in higher dimensions. The bound on the \textquotedblleft largest eigenvalue \textquotedblright $K(u,x)$ can be seen as a generalization of the relationship between the second derivative of a $C^2$ plane curve $u$ and the radius of its osculating circle:

Let $u:\mathbb{R}\rightarrow \mathbb{R}$ be $C^{2}$. Provided $u''\neq 0$, the radius of curvature at $x$ is defined as
$$r_{u,x}:= \dfrac{1}{\kappa}= \dfrac{(1 + u'^{2})^{\frac{3}{2}}}{u''},$$
where $\kappa$ is the curvature of $u$ at $x$, and the right-hand side is the standard formula for computing the curvature of a planar curve [2, \S 8]. Thus, $$u''=\frac{(1+ u'^2)^{3/2}}{r}.$$

\begin{definition}{A.3}
\textit{The osculating circle, or circle of curvature, to a planar curve $C$ at $p$ is the circle that touches $C$ (on the concave side) at $p$ and whose radius is the radius of curvature of $C$ at $p$.} 
\end{definition}
We extend this to the graphs of $C^2$ convex functions in higher dimensions by
\begin{definition}{A.4}
\textit{For a convex function $u:\mathbb{R}^{n}\rightarrow \mathbb{R}$ let the osculating sphere to the graph of $u$ at $x$ be the $n-$sphere tangent to the graph of $u$ at $x$ the with radius equal to that of $\frac{1}{\lambda_{max}}$.}
\end{definition}
It is easy to show that any tangent sphere at $(x,u(x))$ with radius less than the osculating sphere at that point is a (local) sphere of support. And any tangent sphere at $(x,u(x))$ with radius greater than the osculating sphere cannot be a (local) sphere of support.
\subsection{A.4 Spheres of support to a function and its dual}
Given a convex function $u$ with a sphere of support at $(x_0, u(x_0))$, the conjugate function $u^*$ will not necessarily have a sphere of support at the corresponding point $(\nabla u(x_0), u^*(\nabla u(x_0))$. For example take $u=\frac{1}{4}|x|^4$ and $u^*=\frac{3}{4}|x|^{\frac{4}{3}}$. However, for more regular and sufficiently convex functions (e.g. $C^2$ and locally strongly convex), we will have a sphere of support (locally) to both graphs at corresponding points, and the order-reversing property of $\mathscr{L}$ provide a simple inequality relating the radii of these spheres. We state this without proof.

\begin{proposition} {A.5}
\textit{Let $u:\mathbb{R}^n \rightarrow \mathbb{R}$ be strongly convex and $C^2$ near $x_0$, and suppose $u$ has a sphere of support of radius $r_{x_0}$. If $r_{y_0}$ is the radius of a sphere of support to $u^*$ at $y_0=\nabla u(x_0)$, then
$$
r_{y_0}\leq \dfrac{\left(1+|x|^{2}\right)^{\frac{3}{2}} \left(1+|\nabla u(x_0)|^{2} \right)^{\frac{3}{2}}}{r_{x_0}}.
$$}
\end{proposition}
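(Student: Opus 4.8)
The plan is to reduce the statement to two applications of the sphere-of-support estimate of Proposition 1.6(iii) — once for $u$ and once for $u^*$ — glued together by the classical Hessian-inversion formula for the Legendre transform. The point is that strong convexity together with $C^2$-regularity of $u$ near $x_0$ makes $\nabla u$ a $C^1$-diffeomorphism from a neighbourhood of $x_0$ onto a neighbourhood of $y_0 := \nabla u(x_0)$: indeed $\nabla^2 u(x_0)$ is then positive definite, hence invertible, and the inverse function theorem applies. Consequently $u^*$ is $C^2$ near $y_0$, with
$$\nabla u^*(y_0) = x_0, \qquad \nabla^2 u^*(y_0) = \bigl(\nabla^2 u(x_0)\bigr)^{-1},$$
so that $\lambda_{\max}\bigl(\nabla^2 u^*(y_0)\bigr) = 1/\lambda_{\min}\bigl(\nabla^2 u(x_0)\bigr)$. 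By Proposition 1.6(i), $K(u^*,y_0) = \lambda_{\max}\bigl(\nabla^2 u^*(y_0)\bigr)$ and $K(u,x_0) = \lambda_{\max}\bigl(\nabla^2 u(x_0)\bigr)$, both finite and positive.

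First I would apply Proposition 1.6(iii) to $u^*$: the hypothesised sphere of support of radius $r_{y_0}$ at $(y_0, u^*(y_0))$ forces
$$\lambda_{\max}\bigl(\nabla^2 u^*(y_0)\bigr) = K(u^*,y_0) \le \frac{\bigl(1 + |\nabla u^*(y_0)|^2\bigr)^{3/2}}{r_{y_0}} = \frac{\bigl(1 + |x_0|^2\bigr)^{3/2}}{r_{y_0}},$$
and, since $\lambda_{\max}\bigl(\nabla^2 u^*(y_0)\bigr) > 0$, this rearranges to
$$r_{y_0} \le \frac{\bigl(1+|x_0|^2\bigr)^{3/2}}{\lambda_{\max}\bigl(\nabla^2 u^*(y_0)\bigr)} = \bigl(1+|x_0|^2\bigr)^{3/2}\,\lambda_{\min}\bigl(\nabla^2 u(x_0)\bigr).$$
Then I would bound the remaining factor from above: trivially $\lambda_{\min}\bigl(\nabla^2 u(x_0)\bigr) \le \lambda_{\max}\bigl(\nabla^2 u(x_0)\bigr) = K(u,x_0)$, and a second application of Proposition 1.6(iii), now to $u$ itself with its given sphere of support of radius $r_{x_0}$, yields $K(u,x_0) \le \bigl(1+|\nabla u(x_0)|^2\bigr)^{3/2}/r_{x_0}$. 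Chaining these three estimates gives
$$r_{y_0} \le \frac{\bigl(1+|x_0|^2\bigr)^{3/2}\bigl(1+|\nabla u(x_0)|^2\bigr)^{3/2}}{r_{x_0}},$$
which is the asserted bound (reading $|x|$ in the statement as $|x_0|$).

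The only step needing real care is the first one: verifying that ``strongly convex and $C^2$ near $x_0$'' is precisely what licenses the use of Proposition 1.6(i) for $u^*$ and the identity $\nabla^2 u^*(y_0) = \bigl(\nabla^2 u(x_0)\bigr)^{-1}$ — one differentiates $\nabla u^*\circ \nabla u = \mathrm{id}$ near $x_0$ and inverts. Everything after that is bookkeeping with the two sphere-of-support estimates. It is also worth recording, though not strictly required since the proposition is conditional, that such a supporting sphere to $u^*$ at $y_0$ genuinely exists: by the discussion in Appendix A.3 any tangent sphere at $(y_0, u^*(y_0))$ of radius strictly below the osculating radius is a local sphere of support.
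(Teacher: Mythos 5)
Your argument is correct, and in fact the paper offers no proof to compare it against: Proposition A.5 is explicitly stated without proof, so your write-up fills a genuine gap. The three ingredients you chain together are all sound. Strong convexity plus $C^2$-regularity near $x_0$ makes $\nabla^2 u(x_0)$ positive definite, so the inverse function theorem gives $u^*\in C^2$ near $y_0$ with $\nabla^2 u^*(y_0)=\bigl(\nabla^2 u(x_0)\bigr)^{-1}$ and $\nabla u^*(y_0)=x_0$; Proposition 1.6(i) then identifies $K(u^*,y_0)$ with $\lambda_{\max}\bigl(\nabla^2 u^*(y_0)\bigr)=1/\lambda_{\min}\bigl(\nabla^2 u(x_0)\bigr)>0$, and the two applications of Proposition 1.6(iii) (to $u^*$ and to $u$) combine exactly as you say. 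This is consonant with the route the paper gestures at (the order-reversing property of $\mathscr{L}$, i.e.\ Hessian inversion under the transform), and your intermediate estimate
\[
r_{y_0}\;\leq\;\bigl(1+|x_0|^2\bigr)^{3/2}\,\lambda_{\min}\bigl(\nabla^2 u(x_0)\bigr)
\]
is actually sharper than the stated bound, since the final step $\lambda_{\min}\leq\lambda_{\max}$ discards information. Two small points of care, neither of which breaks the proof: the hypotheses are only local, so $u^*$ may be $+\infty$ away from a neighbourhood of $y_0$, but Proposition 1.6(iii) only needs $u^*$ convex and differentiable near $y_0$, which you have; and you correctly read the $|x|$ in the statement as a typo for $|x_0|$.
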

\section{Acknowledgements}
I am very grateful to Y.A. Rubinstein for introducing me to the work of Slodkowski, Harvey and Lawson, and for his ongoing guidance and encouragement. I thank T. Darvas and R. Hunter for helpful comments and discussions. I would also like to thank Z. Slodkowski for an insightful correspondence, and a referee for their careful review and stellar suggestions.

\section{\normalsize References}
\small 
$[1]$A.D. Alexandrov, \textit{Almost everywhere existence of the second differential of a convex function and properties of convex surfaces connected with it (in Russian)}, Lenningrad State Univ. Ann. Math. 37 (1939), 3-35.\\
$[2]$ Y. Animov, \textit{Differential Geometry and Topology of Curves}, CRC Press, 2001.\\
$[3]$ J. Foran, \textit{Fundamentals of Real Analysis}, CRC Press, 1991.\\
$[4]$ F.R. Harvey, H.B. Lawson, Jr., \textit{Dirichlet duality and the non-linear Dirichlet problem}, Comm. on Pure and Applied Math. 62 (2009), 396-443.\\
$[5]$ F.R. Harvey, H.B. Lawson, Jr., \textit{Notes on the differentiation of Quasi-Convex Functions}, 2014.\\
$[6]$ J.-B. Hiriart-Urruty, C. Lemar\'{a}chal, \textit{Convex Analysis and Minimization Algorithms}, Vol.I and II, Springer, 1993.\\
$[7]$ R.T. Rockafellar, \textit{Convex Analysis}, Princeton University Press, 1970.\\
$[8] $ C.A. Rogers, \textit{Hausdorff measures},  Cambridge University Press,1970.\\
$[9]$ Z. Slodkowski, \textit{The Bremermann-Dirichlet Problem for $q-$Plurisubharmonic Functions}, Analli della Scuola Normale Superiore di Pisa, Classe di Scienze, $4^e$ s\'{e}rie, tome 11, no. 2 (1984), p. 303-326. \vspace{10mm}\\ \normalsize
\author{University of Maryland \\
\thanks{\texttt{mdellato@math.umd.edu}}}
\affil{Department of Mathematics, University of Maryland, College Park}
\end{document}